\newtheorem{thm}{Theorem}[subsection]
\newtheorem{propose}[thm]{Proposition}
\newtheorem{lemma}[thm]{Lemma}
\newtheorem{cor}[thm]{Corollary}
\theoremstyle{definition}
\newtheorem{defn}[thm]{Definition}
\newtheorem{remark}[thm]{Remark}
\numberwithin{equation}{section}
\newcounter{spec}
{\end{list}}
\renewcommand{\d}{{\text{\LARGE $\cdot $}}}
\renewcommand{\hat}{\widehat}
\newcommand{\Spec}{\operatorname{Spec}} 
\newcommand{\DM}{\operatorname{DM}}          
\newcommand{\Shv}{\operatorname{Shv}}
\newcommand{\Ch}{\operatorname{Ch}}
\newcommand{\Sch}{\operatorname{Sch}}
\newcommand{\T}{\mathbb{T}} 
\newcommand{\Tmod}{\text{$\T$-{\sf Mod}}}
\newcommand{\TTmod}{\text{$\bar{\T}$-{\sf Mod}}}
\newcommand{\Ttmod}{\text{$\T_T$-{\sf Mod}}}
\newcommand{\Rmod}{\text{$R$-{\sf Mod}}}
\newcommand{\Top}{\text{$\T^{op}$-{\sf Mod}}}
\newcommand{\Gmod}{\text{$G$-{Mod}}}
\newcommand{\Sub}{\operatorname{Sub}}
\newcommand{\Ind}{\operatorname{Ind}}
\newcommand{\Ab}{\operatorname{Ab}}
\newcommand{\reg} {{\rm reg}}
\newcommand{\Aff}{\mathbb{A}}   
\newcommand{\A}{{\sf Add}}      
\newcommand{\C}{\mathbb{C}}     
\newcommand{\Q}{\mathbb{Q}}     
\newcommand{\Z}{\mathbb{Z}}     
\newcommand{\im}{\operatorname{Im}}        
\renewcommand{\ker}{\operatorname{Ker}}  
\newcommand{\Cor}{\operatorname{Cor}}
\newcommand{\Tot}{\operatorname{Tot}}     
\newcommand{\by}[1]{\stackrel{#1}{\rightarrow}}
\newcommand{\longby}[1]{\stackrel{#1}{\longrightarrow}}
\newcommand{\longyb}[1]{\stackrel{#1}{\longleftarrow}}
\renewcommand{\tilde}{\widetilde}
\newcommand{\df}{\mbox{\,${:=}$}\,}
\newcommand{\ie}{{\it i.e. }}
\newcommand{\cf}{{\it cf. }}
\newcommand{\eg}{{\it e.g. }}
\newcommand{\et} {{\operatorname{\acute{e}t}}}
\newcommand{\eff}{{\operatorname{eff}}}
\renewcommand{\bar}{\overline}
\newcommand{\into}{\hookrightarrow}
\newcommand{\limdir}[1]{\mathop{\rm
``lim"}_{\buildrel\longrightarrow\over{#1}}}
\renewcommand{\lim}{\varprojlim}
\newcommand{\onto}{\mbox{$\to\!\!\!\!\to$}}
\newcommand{\boxtensor}{\def\boxtimesten{\Box\kern-7.59pt\raise1.2pt
\hbox{$\times$} }}                                  
\newcounter{elno}                   
\newcommand{\cA}{\mathcal{A}}
\newcommand{\cB}{\mathcal{B}}
\newcommand{\cC}{\mathcal{C}}
\newcommand{\cD}{\mathcal{D}}
\newcommand{\cE}{\mathcal{E}}
\newcommand{\cF}{\mathcal{F}}
\newcommand{\cH}{\mathcal{H}}
\newcommand{\cM}{\mathcal{M}}
\newcommand{\cS}{\mathcal{S}}
\renewcommand{\phi}{\varphi}
\renewcommand{\epsilon}{\varepsilon}
\begin{document}
\title{$\T$-Motives}
\author{Luca Barbieri-Viale}
\address{Dipartimento di Matematica ``F. Enriques", Universit{\`a} degli Studi di Milano\\ Via C. Saldini, 50\\ I-20133 Milano\\ Italy}
\email{luca.barbieri-viale@unimi.it}

\begin{abstract}
Considering a (co)homology theory $\T$ on a  base category $\cC$ as a fragment of a first-order logical theory we here construct an abelian category $\cA[\T]$ which is universal with respect to models of $\T$ in abelian categories. Under mild conditions on the base category $\cC$, \eg for the category of algebraic schemes, we get a functor from $\cC$ to $\Ch(\Ind(\cA[\T]))$ the category of chain complexes of ind-objects of $\cA[\T]$. This functor lifts Nori's motivic functor for algebraic schemes defined over a subfield of the complex numbers. Furthermore, we construct a triangulated functor from $D(\Ind(\cA[\T]))$ to Voevodsky's motivic complexes.
\end{abstract}
\maketitle

\section*{Introduction}
The first task of this paper is to set out the framework of ``theoretical motives'' or $\T$-motives jointly with that of a ``motivic topos'' and to perform some general constructions. The second task is to make use of this framework in algebraic geometry relating $\T$-motives with Nori motives and Voevodsky motives. Let me immediately warn the reader about a possible misunderstanding: these ``theoretical motives'' won't be ``mixed motives'' at once. The concerned wishes are {\it i)}\, to present ``mixed motives'' as a Serre quotient of ``theoretical motives'' and/or {\it ii)}\, to present ``mixed motivic complexes'' as a Bousfield localization of ``theoretical motivic complexes''. In fact, adding $I^+$-invariance and $cd$-exactness to the (co)homology theory $\T$ should most likely be enough to get ``mixed motives''. The matters treated in this paper can be explained as follows.
 
\subsubsection*{(Co)homology theories} A (co)homology theory is herein considered as a fragment of a first-order \emph{logical theory} (see Section \ref{preth} for essential preliminaries in categorical logic). Models shall be families of internal (abelian) groups in a topos or in a suitable (\eg regular, Barr exact or abelian) category, satisfying some axioms. For a fixed base category $\cC$ along with a distinguished subcategory $\cM$ we mean that a model $H$ of such a homological theory in $\cE$ shall be at least a functor $$H: \cC^{\square}\to \cE \hspace*{0.5cm} (X,Y)\leadsto \{H_n (X,Y)\}_{n\in\Z}$$
where $(X,Y)$ is a notational abuse for $Y\to X\in \cM$ and $\cC^{\square}$ is the category of arrows whose objects are arrows of $\cM$. Cohomological functors shall be contravariant and homological functors shall be covariant. We actually here introduce a \emph{regular} (co)homology theory $\T$ (see Section~\ref{rehomth} for details) with a simple exactness axiom so that for any pair of composable arrows $Z\to Y\to X$ in $\cM$ any model $H$ shall be provided with a long exact sequence 
$$\cdots\to H_n (Y,Z)\to H_n (X,Z)\to H_n (X,Y)\to H_{n-1} (Y,Z)\to \cdots$$
of (abelian) groups in $\cE$. This key (co)homology theory $\T$ includes as models all known (co)homology theories (see Section~\ref{models} for a list of examples). Usual (co)homology theories on the category $\cC =\Sch_k$ of algebraic schemes, \eg singular (co)homology, can be recovered as models of the theory $\T$ in the category $\cS$ of sets. Suslin-Voevodsky singular homology can be recovered as a model of $\T$ in the category of (additive) presheaves with transfers. By the way we may obtain new theories $\T^\prime$ adding axioms to our theory $\T$. For example, there is always the regular theory $\T_H$ of a model $H$ obtained adding all regular axioms which are valid in the model $H$. Notably we indicate two additional regular axioms expressing the geometric nature of the homology theory: $I^+$-invariance and $cd$-exactness (see Definitions~\ref{homtopinv} and \ref{cdinv}).

\subsubsection*{Motivic topos} Treating (co)homology theories as first-order theories we can deal with a topos of sheaves on the corresponding syntactic site which we may call \emph{motivic topos}. For our regular theory $\T$ let $\cE[\T]$ be the topos of sheaves on the corresponding regular syntactic site  $\cC_{\T}^\reg$  (see \S \ref{syncat} and \S \ref{Scltop}). Denote $\gamma: \cE[\T]\to \cS$ the unique (geometric) morphism to $\cS$. Recall that $\cE [\T]$ is connected if $\gamma^*$ is fully faithful and locally connected if $\gamma^*$ has a left adjoint $\gamma_!$.  As $\T$ is a regular theory we get that $\cE [\T]$ is connected and locally connected (see Lemma~\ref{regcon}) so that we also get a \emph{$\T$-motivic Galois group} $G_\T$. Recall (\cf SGA~1, SGA~4 and precisely \cite[IV Ex. 2.7.5]{SGA4}) that for such a topos $\cE[\T]$ with a point $f$ then  $$G_\T\df\pi_1(\cE [\T] ,f)$$
is a pro-group such that $\cB_{G_\T}$ is equivalent to the full subcategory ${\sf Gal}(\cE [\T])$ of $\cE [\T]$ of locally constant objects. Note that the inclusion is the inverse image of a surjective morphism $\cE [\T]\onto {\sf Gal}(\cE [\T])$ and the topos $\cB_{G_\T}\cong {\sf Gal}(\cE [\T])$ is a Galois topos (see also \cite[App. A]{D} and \cite{JT}). For algebraic schemes, adding $I^+$-invariance and $cd$-exactness to $\T$, we still get a regular theory  and it will be interesting to compare the resulting motivic Galois group with the Ayoub-Nori motivic Galois groups (see \cite{GC}): we will treat this matter elsewhere.

However, we have that $\cE[\T]$ is a classifying topos, as for any regular theory $\T$ (see Theorem \ref{cltop}), \ie we have a natural equivalence 
$$\Tmod (\cE) \cong \text{\sf Hom} (\cE, \cE[\T])$$ 
between the category of $\T$-models in a topos $\cE$ and that of (geometric) morphisms from $\cE$ to $\cE[\T]$.  

\subsubsection*{$\T$-motives} Internally, in the motivic topos, we do have (abelian) groups that may be considered as ``motives'' in the following sense (see Section~\ref{motives}).  Given a model $H$ of our (co)homology theory $\T$ in a topos $\cE$ we get a morphism $f_H:\cE\to \cE[\T]$ and a \emph{realization} exact functor $$f^*_H : \Ab(\cE[\T])\to \Ab(\cE)$$ induced by the inverse image  $f^*_H$ of $f_H$. Note that the realization is also faithful if $f_H$ is a surjection. For theories $\T^\prime$ obtained adding axioms to $\T$ we get a sub-topos $f:\cE[\T^\prime]\into \cE[\T]$ and an induced  Serre quotient $f^*:\Ab(\cE[\T])\to \Ab(\cE[\T^\prime])$ with a section $f_*$ (see Lemma \ref{fadd}).

Recall that the category of abelian groups in a Grothendieck topos is a Grothendieck category. However, we may wish to restrict realizations to smaller abelian categories: this is possible in the case of the regular theory $\T$. Actually, the regular syntactic category $\cC_{\T}^\reg$ of our regular theory $\T$ is additive (and this is also the case for any other regular theory $\T^\prime$ on the same signature, see Lemma~\ref{synadd} and \cf \cite[Lemma 2.4]{BVCL}). Let $\cA [\T]$ be the (Barr) exact completion of $\cC_{\T}^\reg$: it is an abelian category.  Call (effective) \emph{constructible $\T$-motives} the objects of $\cA [\T]$ and \emph{$\T$-motives} the objects of $\Ind(\cA[\T])$. 

The use of the (Barr) exact completion of the regular syntactic category was an idea of O.\/ Caramello, appearing in \cite{BVCL} in order to obtain Nori's category of a representation of a diagram {\it via}\, the regular theory of a model (see Theorem \ref{BVCL}). In particular, this applies to the model given by singular homology and yields back Nori motives (see Corollary \ref{Nori}). However, the universal representation theorem of \cite{PBV} shows us that Nori's category as well as all these categories $\cA [\T^\prime]$ can be seen directly as Serre's quotients of Freyd's free abelian category on the preadditive category generated by a diagram. The link with the syntactic category is then given by the additive definable category generated by a model (see \cite{PBV} for details).  In general, here we have fully faithful exact functors $$\cA [\T]\into \Ind(\cA[\T])\into \Ab(\cE [\T])$$ 
and for any abelian category $\cA$ we get an equivalence
$$\Tmod (\cA) \cong \text{\sf  Ex} (\cA [\T], \cA)$$ 
with the category of exact functors (see Proposition \ref{abeliancon}). Given a $\T$-model $H$ in $\cA$ we thus get an exact functor $$r_H: \cA [\T]\to \cA$$
If $\cA =\Ind(\cA)$ then $r_H$ induces an exact functor $\Ind(\cA[\T])\to \cA$. Note that any abelian category $\cA$ can be regarded as an exact full subcategory of $\Ab(\cE)$ for a suitable topos $\cE$. Thus $r_H$ is always the restriction of the $f^*_H$ induced by the corresponding $\T$-model in $\cE$.
 
\subsubsection*{$\T$-motivic complexes} The category of $\T$-motivic complexes is the category $\Ch (\Ind (\cA[\T]))$ of (unbounded) chain complexes of $\T$-motives (see \S \ref{complexes}): this is  a target for a $\T$-motivic functor from our base category $\cC$. In order to get such a functor 
 $$\cC\to \Ch (\Ind (\cA[\T]))$$ we deal with a Grothendieck ``niveau'' spectral sequence (see Lemma \ref{niveau}) which measures the defect of cellularity (see Lemma \ref{cellular}). Note that for any $H\in \Tmod (\cA)$ in an abelian category $\cA$ the exact functor $r_H$ induces an exact functor $\Ind(\cA [\T]) \to \Ind(\cA)$ which yields a realization functor 
$$\Ch(\Ind (\cA[\T]))\to \Ch(\Ind (\cA))$$
(see Proposition \ref{eqrel}). The resulting $\T$-motivic functor actually lifts Nori's motivic functor (see Proposition \ref{Nmotf}). A similar construction yields a functor to Voevodsky motivic complexes (see Proposition \ref{DM}).

Finally, remark that we have a natural model category structure on the category of chain complexes over any Grothendieck abelian category where cofibrations are the
monomorphisms and weak equivalences the quasi-isomorphisms (see \cite{CD} and \cite{Bk}). We will study the relations with the universal model structure given by simplicial presheaves on $\cC$  in another paper.

\section*{Notation} We shall follow the conventions adopted in \cite{KS} and \cite{SGA4} on Grothendieck universes $U$. For example, we adopt the same definition of a $U$-category and that of a $U$-small category $\cC$ but we drop the reference to $U$ when unnecessary. For a topos we here mean a Grothendieck topos. Denote $\cS$ the topos of $U$-sets and $\Ab$ the category of $U$-abelian groups.  Denote $\hat{\cC}$ the {\it big} category of pre-sheaves of $U$-sets.  Denote $\Ind(\cC)$ the $U$-category of Ind objects of any $U$-category $\cC$ (\cf \cite[\S 6]{KS}).  For a cartesian category $\cC$ denote $\Ab(\cC)$ the category of internal abelian groups. Denote ${\sf Lex} (\cC, \cD)$ the category of left exact functors from $\cC$ to $\cD$. If $\cC$ and $\cD$ are additive categories denote $\A (\cC,\cD)$ the category of additive functors. For a site $(\cC, J)$ we always assume that $\cC$ is essentially $U$-small and has finite limits; we denote $\cE\df \Shv (\cC, J)$ the topos of sheaves of $U$-sets. Denote ${\sf Hom} (\cE, \cF)$ the category of geometric morphisms $f = (f_*, f^*)$ from a topos $\cE$ to a topos $\cF$. 

\section{Preliminaries on theories and models}\label{preth}
Recall briefly what is a general first-order theory $\T$ over a signature $\Sigma$ along with its categorical interpretations. We refer to \cite{El} for a modern detailed textbook but see \cite{BJ} for a synthetic exposition of the key facts. See also \cite{Bu} for a comprehensive account including detailed proofs of all key facts on {\it regular} theories and categories.

A signature $\Sigma$ consists of sorts $X,Y,\ldots$ function symbols $f,g,\ldots$ and relation symbols $R,S, \ldots$ (see\cite[D1.1]{El}). A collection of terms and formulas $\varphi$ is formed by allowing regular, geometric and general first-order formulas over $\Sigma$.  A theory $\T$ (see \cite[D1.1.3]{El}) is a set of sequents $\phi\vdash_{\vec{x}} \psi$ called axioms. Say that a theory $\T^\prime$ is an extension of a theory $\T$ if the theory $\T^\prime$ is obtained from $\T$ by adding axioms over a signature $\Sigma^\prime$ containing $\Sigma$. 

 A theory may be interpreted in a category (see \cite[D1.2]{El}). A $\T$-model is an interpretation such that all axioms are valid. We shall denote by  $\Tmod (\cC)$ the category of $\T$-models in a category $\cC$. Say that two theories $\T$ and $\T^\prime$ are Morita equivalent if they have equivalent categories of models. Varying categories of $\T$-models $\Tmod (-)$ can be made 2-functorial with respect to appropriate functors. 
  
A fragment of first-order logic that is particularly interesting for our purposes is that of a regular theory: this is a theory where all axioms are regular sequents, \ie involving formulas making use of $\top$, $=$, $\wedge$ and $\exists$ only (see \cite[D1.1.3(c)]{El} and \cite[\S 3]{Bu} for details).  A regular theory can be interpreted in any regular category.  For a regular theory $\T$ and a regular functor $\cC\to \cD$ between regular categories, \ie a left exact functor that preserves regular epis, we get a functor $\Tmod (\cC)\to \Tmod (\cD)$. A key fact for a regular theory $\T$ is that the resulting 2-functor $\Tmod (-)$ on regular categories is representable by the so called syntactic regular category (see \cite[Th. 6.5]{Bu}). 

\subsection{Syntactic categories and sites}\label{syncat}
Recall that for any first-order, geometric or regular theory $\T$ we get a \emph{\it syntactic category} $\cC_{\T}^\dag$ where the decoration $\dag = $ fo, gm or reg stands for first-order, geometric or regular respectively. All these (essentially small) categories $\cC_{\T}^\dag$ have objects the $\alpha$-equivalence classes of formulae over the signature and arrows $\T$-provable-equivalence classes of formulae which are $\T$-provably functional (see \cite[D1.4]{El}, \cite[2.4]{BJ} and \cite[\S 6]{Bu}) There is a Grothendieck (subcanonical) topology $J_{\T}^\dag$ on $\cC_{\T}^\dag$ (see \cite[D3.1]{El} and \cite[\S 3]{BJ}). For $\dag=$ reg the regular syntactic category $\cC_{\T}^\reg$ is provided with the topology $J^\reg$ (as it is any regular category) where a covering is given by a regular epi. For a site $(\cC, J)$ and any relevant category $\cD$ we shall denote by $${\sf  Lex}_{J}(\cC,\cD)$$ the corresponding left exact $J$-continuous functors, \ie sending $J$-covering sieves to epimorphic families. Thus ${\sf  Lex}_{J^\reg}(\cC_{\T}^\reg,\cC)$ is the category of regular functors from $\cC_{\T}^\reg$ to $\cC$ a regular category. 

\begin{lemma}\label{SGAlemma} Let $\cE=\Shv (\cC ,J)\subset \hat{\cC}$ be such that $\cC$ has finite limits and $J$ is subcanonical. For any topos $\cF$ we have natural equivalences 
$$\begin{array}{lll}
\text{\sf  Hom} (\cF, \hat{\cC})&\longby{\simeq}&\text{\sf  Lex}(\cC,\cF)\hspace*{14pt}\\
\hspace*{14pt}\bigcup & &\hspace*{4pt}\bigcup\\
\text{\sf  Hom} (\cF, \Shv (\cC ,J))&\longby{\simeq}&\text{\sf  Lex}_J(\cC,\cF)\hspace*{14pt}
\end{array}$$
where $f=(f_*,f^*)\leadsto f^*$ is sending a geometric morphism to the restriction of $f^*$ to $\cC$ regarded as a (full) subcategory of $\Shv (\cC ,J)$ \emph{via} the Yoneda embedding.
\end{lemma}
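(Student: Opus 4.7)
The plan is to deduce both equivalences from the classical Diaconescu-type theorem in SGA 4, Exp. IV, since the two statements are the generic presheaf and sheaf cases of the same correspondence. I will construct the functors in both directions explicitly and then check they are mutually quasi-inverse and natural in $\cF$.

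For the top equivalence, start from the right-hand side. Given a geometric morphism $f=(f_*,f^*):\cF\to\hat{\cC}$, the inverse image $f^*$ is left exact by definition of a geometric morphism. Composing with the Yoneda embedding $y:\cC\into\hat{\cC}$, which itself is left exact (as $\cC$ has finite limits and Yoneda preserves the limits that exist in $\cC$), produces a left exact functor $f^*\!\circ y:\cC\to\cF$. Conversely, given a left exact functor $L:\cC\to\cF$, form its left Kan extension along $y$; since $\hat{\cC}$ is the free cocompletion of $\cC$, this provides a colimit-preserving extension $L^!:\hat{\cC}\to\cF$, which by the adjoint functor theorem in a topos has a right adjoint. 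The crucial point — and the main technical obstacle — is showing that $L^!$ is left exact. This is exactly the content of the flat-functor theorem: because $\cC$ has finite limits, left exactness of $L$ is equivalent to $L$ being flat in the sense of filtered colimits of representables, and this flatness transfers to $L^!$ via the usual computation expressing any presheaf as a filtered colimit over its category of elements. I would invoke \cite[IV \S 5--\S 7]{SGA4} for this step rather than re-derive it. Mutual inverseness and naturality then follow formally from the universal property of Yoneda.

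For the bottom equivalence, the plan is to realize $\Shv(\cC,J)$ as a subtopos of $\hat{\cC}$ via the geometric inclusion $i=(i_*,i^*):\Shv(\cC,J)\into\hat{\cC}$, where $i_*$ is the inclusion and $i^*$ is sheafification. Since $J$ is subcanonical, the Yoneda embedding factors through $\Shv(\cC,J)$, which is what allows the vertical inclusions in the diagram to make sense. Then a geometric morphism $\cF\to\hat{\cC}$ factors through $i$ if and only if its inverse image inverts every $J$-local isomorphism; using the description of $J$-local isomorphisms as generated by covering sieves $R\into y(X)$, this translates precisely to the condition that the associated left exact functor $\cC\to\cF$ sends any $J$-covering sieve to an epimorphic family, i.e. is $J$-continuous. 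This matches exactly the definition of $\text{\sf Lex}_J(\cC,\cF)$.

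Finally, I would assemble the two equivalences into the commutative square displayed in the statement, noting that the vertical inclusions on each side correspond — on the left, by the geometric embedding $i$; on the right, by the defining subcategory condition on lex functors. Naturality in $\cF$ is immediate from the construction, as both assignments are given by composition on one side and by left Kan extension on the other, each being manifestly functorial. The only real work is the flat-functor/left-exactness verification mentioned above; everything else is a bookkeeping consequence of the universal properties of $\hat{\cC}$ and of sheafification.
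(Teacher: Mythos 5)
Your proof is correct and is precisely the standard argument underlying the references the paper invokes (SGA\,4, Exp.\ IV, Cor.\ 1.7, and Johnstone C2.3.9): the Diaconescu-type correspondence between geometric morphisms into $\hat{\cC}$ and flat (here, left exact, since $\cC$ has finite limits) functors $\cC\to\cF$, together with the characterization of the subtopos $\Shv(\cC,J)\into\hat{\cC}$ in terms of $J$-continuity. The paper itself supplies no argument but only a citation, so there is no genuine divergence in method; your sketch is exactly what one would find by unfolding that citation.
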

\begin{proof} This was proven by Grothendieck \& Verdier, see \cite[IV Cor. 1.7]{SGA4} \cf \cite[C2.3.9]{El}.
\end{proof} 

\subsection{Classifying topos}\label{Scltop} We shall denote $\cE [\T]\df\Shv (\cC_{\T}^\dag,J_{\T}^\dag)$ the corresponding topos dropping the refernce to $\dag$ when unnecessary. Recall the following:
\begin{thm}\label{cltop} Let $\cF$ be a topos. If $\T$ is geometric or regular we have natural equivalences 
$$ \Tmod (\cF)\longby{\simeq} \text{\sf Lex}_{J_{\T}^\dag}(\cC_{\T}^\dag,\cF) \longyb{\simeq} \text{\sf Hom} (\cF, \cE [\T])$$
for $\dag =$ {\rm gm or reg} respectively.
If $\T$ is a first-order theory we have a fully faithful functor 
$$\Tmod (\cF)\into \text{\sf Hom} (\cF, \cE [\T])$$
and there is a theory $\bar{\T}$ which is a (conservative) extension of $\T$ such that  $$\TTmod (\cF)\longby{\simeq}  \text{\sf Open} (\cF, \cE [\T])$$
where $\text{\sf Open}\subseteq \text{\sf Hom}$ is the full subcategory of open geometric morphisms.
\end{thm}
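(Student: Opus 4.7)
The plan is to assemble two ingredients: the universal property of the syntactic category $\cC_\T^\dag$ and Lemma~\ref{SGAlemma}. For $\dag = \reg$ or $\gm$, first I would establish $\Tmod(\cF) \simeq \text{\sf Lex}_{J_\T^\dag}(\cC_\T^\dag,\cF)$ via the \emph{generic model} inside $\cC_\T^\dag$: the tautological interpretation sending a sort $X$ to the formula-in-context $[x.\top]$, each function symbol $f$ to the arrow classifying $[y = f(\vec x)]$, and each relation symbol $R$ to the subobject it represents. A direct check shows this interpretation validates every axiom of $\T$, so it is a $\T$-model in $\cC_\T^\dag$. Evaluation at the generic model then yields a functor $\text{\sf Lex}_{J_\T^\dag}(\cC_\T^\dag,\cF)\to\Tmod(\cF)$, whose quasi-inverse sends a model $M$ in $\cF$ to the functor interpreting each formula-in-context in $M$. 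This assignment is well-defined on $\T$-provable-equivalence classes because $M$ validates the axioms, left exact because $\cF$ is cartesian, and $J_\T^\dag$-continuous because the covers of $J^\reg$ (single regular epis) and $J^\gm$ (unions of subobjects) correspond exactly to the existential and disjunctive rules belonging to the regular or geometric fragment.

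Applying Lemma~\ref{SGAlemma} to the syntactic site $(\cC_\T^\dag,J_\T^\dag)$---which is essentially small, has finite limits, and is subcanonical---yields $\text{\sf Lex}_{J_\T^\dag}(\cC_\T^\dag,\cF)\simeq \text{\sf Hom}(\cF,\cE[\T])$. Composing the two equivalences gives the statement for regular and geometric theories.

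For a first-order theory $\T$ the same machine still produces a fully faithful functor $\Tmod(\cF)\into \text{\sf Hom}(\cF,\cE[\T])$, but essential surjectivity fails because the inverse image $f^*$ of a general geometric morphism need not preserve the Heyting structure (negation, implication, universal quantification) that $\cC_\T^{\rm fo}$ carries. The fix is the \emph{Morleyization} $\bar\T$: for each first-order formula $\phi(\vec x)$ one adjoins a fresh relation symbol $R_\phi(\vec x)$ together with coherent sequents axiomatizing $R_\phi\dashv\vdash\phi$ by geometric means only---for instance $R_{\phi\Rightarrow\psi}\wedge R_\phi\vdash R_\psi$ and $\top\vdash R_\phi\vee R_{\neg\phi}$ for implication and negation, and analogous universally quantified sequents forcing $R_{\forall y.\phi}(\vec x)$ to coincide with the universal image of $R_\phi(\vec x,y)$ along the projection $(\vec x,y)\mapsto \vec x$. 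The resulting $\bar\T$ is coherent, conservatively extends $\T$, and has the same classifying topos. The point is that the sequents controlling $R_{\forall y.\phi}$ and $R_{\neg\phi}$ are validated under a geometric morphism precisely when its inverse image commutes with the right adjoints to pullback along projections, which is exactly the defining property of an open geometric morphism. This yields $\TTmod(\cF)\simeq\text{\sf Open}(\cF,\cE[\T])$.

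The main obstacle is this last matching: systematically translating the Morleyization axioms for first-order connectives into the Beck--Chevalley style conditions defining openness. It requires an internal-language computation showing that preservation of all the new relations $R_\phi$ by $f^*$ is equivalent to preservation of universal quantification along product projections, for which I would follow the treatment in \cite[D3.1]{El}.
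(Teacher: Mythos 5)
Your treatment of the regular and geometric case is correct and follows the same path as the paper: the generic model inside $\cC_\T^\dag$ gives the universal property of the syntactic site, and Lemma~\ref{SGAlemma} converts this into the equivalence with geometric morphisms to $\cE[\T]$.

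The first-order case has a genuine gap: $\bar{\T}$ is \emph{not} the Morleyization. If $\bar{\T}$ were the Morleyization of $\T$, it would be a coherent theory with the same classifying topos, and then by the geometric part of the theorem you would obtain $\TTmod(\cF) \simeq \text{\sf Hom}(\cF, \cE[\T])$ --- \emph{all} geometric morphisms, not just the open ones. This is exactly what Morleyization is for (recovering all points of the classifying topos from a first-order theory via a coherent surrogate), and it directly contradicts the equivalence $\TTmod(\cF) \simeq \text{\sf Open}(\cF, \cE[\T])$ you are trying to prove. Put differently: a model of the Morleyization in $\cF$ only validates coherent sequents relating the fresh relation symbols $R_\phi$; it imposes no compatibility with the \emph{topos-internal} interpretation of implication or universal quantification, so a generic geometric morphism (not open) will still produce a Morleyization model. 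Your claim that preservation of the $R_\phi$ is equivalent to preservation of universal quantification along projections is therefore false. A secondary problem is that the sequent you propose, forcing $R_{\forall y.\phi}$ to ``coincide with the universal image'' of $R_\phi$, is not itself a coherent sequent, so it cannot be an axiom of a coherent theory in the first place.

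The paper's $\bar{\T}$ is the Butz--Johnstone construction: a genuinely \emph{first-order} (not coherent) theory, over the same signature, obtained by adding to $\T$ those first-order sequents valid in the generic model, so that $\bar{\T}$-models in $\cF$ are precisely the Heyting functors $\cC_{\T}^{\rm fo}\to\cF$. Since open geometric morphisms are exactly those whose inverse image is Heyting, this gives the stated equivalence with $\text{\sf Open}(\cF,\cE[\T])$. This is the content the paper delegates to the Butz--Johnstone reference, and it is where your proposal and the correct proof diverge.
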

\begin{proof} Note that by the Lemma~\ref{SGAlemma} and inspecting $J^\dag$-continuous for $\dag =$ reg or gm one is left to show that the syntactic category $\cC_{\T}^\dag$ represents the $2$-functor $\Tmod (-)$. In fact, for a model $M\in \Tmod (\cF)$ we get a functor $F_M: \cC_{\T}^\dag\to \cF$ sending a formula to its interpretation and $F_M$ is left exact $J^\dag$-continuous: this yields the claimed equivalence.  For details see \cite[Th. 6.5]{Bu}, \cite[Th. X.6.1]{McM} and \cite[Th. D3.1.4]{El}. 

If $\T$ is first-order the claims are contained in \cite{BJ}. Recall that a geometric morphism $f: \cF \to\cE [\T]$ is open if the inverse image $f^*$ is Heyting (see \cite{J} and \cite[\S 1]{BJ}). Thus the fully faithful embedding is clear since $\Tmod (\cF)$ are just Heyting functors from $\cC_{\T}^{\rm fo}$ to $\cF$ (see \cite[2.4 \& (5)]{BJ}). Finally, such a theory $\bar{\T}$ of open functors is constructed in \cite[4.4]{BJ}. 
\end{proof}
 \begin{remark}
A topos $\cE $ is Boolean if and only if $$\text{\sf Open} (\cF, \cE)=\text{\sf Hom} (\cF, \cE)$$ for any topos $\cF$ (see \cite[3.5]{J}). Thus $\cE [\T]$ Boolean implies that $\T$ is Morita equivalent to $\bar{\T}$. Conversely,  if $\T$ is a geometric or regular theory Morita equivalent to $\bar{\T}$ then $\cE [\T]$ is Boolean. These are simple consequences of Theorem~\ref{cltop} (see also \cite[6.3]{BJ}). Actually, to get a Boolean classifying topos is quite restrictive (see \cite[D3.4]{El}). 
\end{remark}
\begin{defn} Call $\cE [\T]\df \Shv (\cC_{\T}^\dag,J_{\T}^\dag)$ the \emph{\it classifying topos} of the first-order theory $\bar{\T}$ and the  geometric/regular theory $\T$ respectively. 
\end{defn}
Models of the theory in the category of sets are points of the classifying topos: models of  $\T$ in the category $\cS$ of sets are given by the Theorem~\ref{cltop} as follows $$\Tmod (\cS)\into \text{Hom} (\cS, \cE [\T])\df \text{Points}(\cE [\T])\ \ M\leadsto f$$ 
Moreover, for $\T$ geometric/regular, the Theorem~\ref{cltop} yields a \emph{\it universal model} $M^{\T}\in \Tmod (\cE [\T])$ corresponding to the identity such that for any $M\in \Tmod (\cF)$  we have that 
\begin{equation}
f^*(M^{\T})=M
\end{equation}
for $f:  \cF \to\cE [\T]$ corresponding to $M$.  

\begin{remark} Every topos is the classifying topos of a theory. The geometric first-order theory $\T$ of a topos $\cE=\Shv (\cC ,J)$ is the theory of left exact $J$-continuous functors on the signature given by $\cC$ (see \cite[D3.1.13]{El}). The Lemma~\ref{SGAlemma} shows us that $\cE \cong \cE [\T]$. The theory $\T$ is regular if and only if $J$ is generated by singleton covering families (see \cite[D3.3.1]{El}).  There is also a first-order theory $\bar{\T}$ whose models are inverse images of open geometric morphisms (see \cite[\S 4]{BJ}).
 \end{remark}

\section{A regular (co)homology theory}\label{rehomth}
All the axiomatizations of a homology theory are quite involved but possibly first-order.  They are starting with a given category $\cC$ which we may consider the indexing set for our signature $\Sigma$ containing it as sorts and function symbols with additional symbols formalizing an algebraic structure, \eg abelian groups, for each sort.  The minimal reasonable axiomatization we can imagine is provided by a set of regular sequents defining a regular (mixed) homology theory $\T$ as follows.   

\subsection{Category of pairs}\label{pairs}
Let $\cC$ be any category and fix  a subcategory $\cM$ of $\cC$.  
Let $\cC^{\square}$ be the category with objects the arrows in $\cM$ and morphisms the commutative squares of $\cC$. Call $\cC^{\square}$ a \emph{category of pairs} and denote $(X,Y)$ an object of $\cC^{\square}$, \ie  a morphism $f: Y \to X$ of $\cM$, and $\square: (X,Y) \to (X', Y')$ a commutative square
$$\xymatrix{Y\ar[r]^-{f}  \ar[d]_-{} & X \ar[d]_-{} \\
Y' \ar@{>}[r]^-{f'} & X'}$$
We shall denote $\boxdot : (X,Y)\to (X,Y)$ the identity and $\boxminus : (X,Y) \to (X'', Y'')$ the composition of $\square: (X,Y) \to (X', Y')$ and $\square': (X',Y') \to (X'', Y'')$. Moreover, for $f : Z \to Y$ and $g: Y \to X$ objects of $\cC^{\square}$ we shall denote $\partial  : (Y,Z) \to (X,Y)$ the following morphism of $\cC^{\square}$ 
$$\xymatrix{Z\ar[r]^-{f}  \ar[d]_-{f} & Y \ar[d]^-{g} \\
Y \ar@{>}[r]^-{g} & X}$$
Note that $\partial$ has a canonical factorization in $\cC^{\square}$ given by 
$$
\xymatrix{Z\ar[r]^-{f}  \ar[d]_-{||} & Y \ar[d]^-{g} \\
Z \ar@{>}[r]^-{gf}\ar[d]_-{f}  & X\ar[d]_-{||}\\
Y \ar@{>}[r]^-{g} & X}
$$
and we let $\boxtimes: (Y,Z) \to (X,Z)$ and $\boxplus:(X,Z)  \to (X,Y)$ be the above squares. Finally, we  shall call a 
 $\partial$-cube of $\cC$ the following commutative square 
\begin{equation}\label{dcube}
\xymatrix{(Y,Z)\ar[r]^-{\partial}  \ar[d]_-{\Diamond} & (X,Y) \ar[d]^-{\square} \\
(Y',Z')\ar@{>}[r]^-{\partial'} & (X',Y')}
\end{equation}
of $\cC^{\square}$ induced by a pair of arrows $\Diamond: (Y,Z)\to (Y',Z')$ and $\square : (X,Y)\to (X',Y')$.

\subsection{Signature}\label{signature}
 Consider the following signature $\Sigma$ based on a category of pairs. Let $h_n(X,Y)$ denote sorts and variables $x:h_n(X,Y)$ indexed by $n\in \Z$ and all objects of $\cC^{\square}$. Consider $+$ a binary function symbol $h_n(X,Y)^2\to h_n(X,Y)$, an 1-ary function symbol $h_n(X,Y)\to h_n(X,Y)$ with value $-x$ for $x:h_n(X,Y)$ and a constant $0$ of sort $h_n(X,Y)$ indexed by $n\in \Z$ and all objects of $\cC^{\square}$. Consider function symbols $\square_n: h_n(X,Y) \to h_n(X',Y')$ corresponding to morphisms of $\cC^{\square}$ and an additional function symbol $\partial_n :h_n(X,Y) \to h_{n-1}(Y,Z)$ corresponding to $\partial : (Y,Z) \to (X,Y)$ morphism of $\cC^{\square}$.
No relation symbols apart from the equality.
 
\subsection{Axioms}\label{axiom} For each sort $h_n(X,Y)$, \ie for each $(X,Y)$ object object of $\cC^{\square}$ and $n\in \Z$, we introduce the following axioms:
\begin{itemize}
\item[$\sqcup$1] $h_n(X,Y)$ is a group, \ie
$$\top \vdash_{x,y,z} (x+y)+z = x+(y+z)$$ 
$$\top\vdash_x 0+x = x+0 =x$$
$$\top\vdash_x (x+(-x)=0)\wedge ((-x)+ x = 0)$$ 
and abelian if 
$$\top\vdash_{x,y} x+ y = y+x$$ 

\item[$\sqcup$2]  $\square_n :h_n(X,Y) \to h_n(X',Y')$ and $\partial_n : h_n(X,Y) \to h_{n-1}(Y,Z)$ are group homomorphisms, \ie
$$\top\vdash_{x,y} \square_n(x+y) = \square_n(x)+\square_n(y)$$ 
$$\top\vdash_{x,y} \partial_n(x+y) = \partial_n(x)+\partial_n(y)$$

\item[$\sqcup$3]  $h_n(X,Y)$ are functors on $\cC^{\square}$ and $h_n(X,Y) \to h_{n-1}(Y,Z)$ is natural, \ie given $\square_n :h_n(X,Y) \to h_n(X',Y')$, $\square_n' :h_n(X',Y') \to h_n(X'',Y'')$ and $\boxminus_n :h_n(X,Y) \to h_n(X'',Y'')$
$$\top\vdash_x \square_n'(\square_n(x)) = \boxminus_n(x)$$
and for $\boxdot_n :h_n(X,Y)\to h_n(X,Y)$
$$\top\vdash_x \boxdot_n (x)=x$$
and associated to a $\partial$-cube \eqref{dcube}
$$\top\vdash_x \Diamond_n(\partial_n(x)) = \partial'(\square_n(x))$$

\item[$\sqcup$4]  $h_n(Y,Z) \to h_n(X,Z) \to h_n(X,Y) \to h_{n-1}(Y,Z)\to h_{n-1}(X,Z)$ is exact, \ie it is a complex
$$\top\vdash_x \boxplus_n(\boxtimes_n(x)) =0$$
$$\top\vdash_x \partial_n(\boxplus_n(x)) =0$$
$$\top\vdash_x \boxtimes_{n-1}(\partial_n(x)) =0$$
and finally
$$\boxplus_n(x)=0\vdash_x  (\exists y) (\boxtimes_n(y) =x)$$
$$\partial_n(x)=0\vdash_x  (\exists y) (\boxplus_n(y) =x)$$
$$\boxtimes_{n-1}(x)=0\vdash_x  (\exists y) (\partial_n(y) =x)$$\\
\end{itemize}

Summarizing \ref{pairs}, \ref{signature} and \ref{axiom} we set:
\begin{defn}\label{homtop} Call (mixed) \emph{homology theory $\T$} the regular theory with the sequents $\sqcup$1-4 over the signature $\Sigma$ associated to $\cC^{\square}$ a category of pairs. 
\end{defn}

Let $\cE [\T]$ be the corresponding classifying topos, \ie the \emph{motivic topos}. We have:\begin{lemma}\label{regcon} $\cE [\T]$ is connected and locally connected.
\end{lemma}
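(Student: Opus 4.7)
The plan is to verify the standard site-theoretic criteria for a Grothendieck topos $\Shv(\cC,J)$ to be connected and locally connected (see, e.g., \cite[C3.3]{El}). Namely, $\Shv(\cC,J)$ is locally connected provided every $J$-covering sieve $R$ on every object $c \in \cC$ is connected as a category when viewed as a full subcategory of $\cC/c$; and it is moreover connected provided $\cC$ is itself a connected category. I shall apply these criteria to the regular syntactic site $(\cC_\T^\reg, J^\reg)$.

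For local connectedness, I use that by the very construction of the regular topology, every covering sieve on an object $c$ of $\cC_\T^\reg$ is generated by a single regular epimorphism $p\colon d \to c$. Hence, as a full subcategory of $\cC_\T^\reg/c$, such a sieve consists of those arrows $(e, f\colon e \to c)$ that factor through $p$; and choosing a factorization of each one produces a morphism to $(d,p)$ in the slice. This exhibits $(d,p)$ as a weakly terminal object of the sieve, which is thereby nonempty and connected.

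For connectedness, I use that the regular syntactic category $\cC_\T^\reg$ has a terminal object (the empty-context formula $\{[\,].\top\}$), and is therefore a connected category. Combined with the previous paragraph and the cited criteria, this yields that $\gamma^*\colon \cS \to \cE[\T]$ admits a left adjoint $\gamma_!$ and is fully faithful, which is the desired conclusion. The only step of substance is the verification that regular covering sieves are connected; the rest is standard packaging from the site-theoretic machinery, and the main potential pitfall is only to make sure that the site $(\cC_\T^\reg, J^\reg)$ is indeed a ``connected site'' in the precise sense of the criteria cited---something that here follows directly from the singleton-generated nature of the regular topology.
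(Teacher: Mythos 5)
Your proof is correct and follows essentially the same route as the paper's: both invoke the site-theoretic criterion from the Elephant (the paper cites C3.3.10) by observing that the regular syntactic site $(\cC_\T^\reg, J^\reg)$ is a locally connected site (covering sieves being generated by a single regular epimorphism, hence nonempty and connected in the slice) whose underlying category has a terminal object. The paper states these two facts without elaboration; you supply the short justification of local connectedness via the weakly terminal object $(d,p)$ in the covering sieve, which is exactly the argument being implicitly relied upon.
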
 
\begin{proof} In fact, the regular syntactic site $(\cC_{\T}^{\rm reg},J^{\rm reg})$ is a locally connected site whose underlying category has a terminal object and one just applies \cite[C3.3.10]{El}.\end{proof}

Note that if we denote $h^n(X,Y)$ the sorts in \ref{signature} and consider function symbols $\square^n: h^n(X',Y') \to h^n(X,Y)$ corresponding to morphisms $\square : (X,Y)\to (X',Y')$ of $\cC^{\square}$   and an additional function symbol $\partial^n :h^{n}(Y,Z)\to h^{n+1}(X,Y)$ corresponding to $\partial : (Y,Z) \to (X,Y)$ we get a signature that we denote $\Sigma^{op}$. Reversing the arrows in the axioms \ref{axiom} we get a corresponding list of sequents $\sqcap$1-4. Rewriting \ref{axiom} over $\Sigma^{op}$ we set:

\begin{defn}\label{cohomtop} 
Call (mixed) \emph{cohomology theory} $\T^{op}$ the regular theory with the list of sequents $\sqcap$1-4 over the signature $\Sigma^{op}$. 
\end{defn}

\begin{remark}\label{resign} Note that for any category $\cC'$ together with a pair of compatible forgetful functors $\cC'\to \cC$ and $\cM'\to \cM$ the theory $\T$ (resp. $\T^{op}$) on the signature $\Sigma$ (resp. $\Sigma^{op}$) given by $(\cC, \cM)$ can be regarded as well on the restricted signature $\Sigma'$ (resp. $\Sigma'^{op}$) given by $(\cC', \cM')$.
\end{remark}

\subsection{Further axioms} We may and will add regular, geometric and first-order axioms to the above regular theory as soon as we enrich the category of pairs with further structure: several canonical properties  can be easily axiomatized in first-order logic and any (co)homology theory $\T^\prime$ should appear as an extension of $\T$.  We have:
\begin{lemma} \label{fadd}
Let $\T^\prime$ be a regular or geometric theory over the same signature $\Sigma$ obtained from $\T$ by adding axioms. We then get $f = (f_*, f^*): \cE[\T^\prime] \into \cE[\T]$ which is an embedding.   In particular$$f^*: \Ab(\cE [\T])\onto \Ab(\cE [\T^\prime])$$ is a Serre quotient with a section
$$f_*: \Ab(\cE [\T^\prime])\into \Ab(\cE [\T])$$ which is fully faithful. 
\end{lemma}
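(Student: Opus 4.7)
The plan is to split the argument into two steps: first, produce the geometric embedding $f : \cE[\T']\hookrightarrow \cE[\T]$ from pure syntactic/topos-theoretic grounds, and second, pass to the categories of internal abelian groups using Gabriel's characterization of localizations.

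For the first step, I would start from the universal model $M^{\T'}\in\T'\text{-Mod}(\cE[\T'])$ produced in Theorem~\ref{cltop}. Because $\T'$ is obtained from $\T$ by adding axioms over the same signature $\Sigma$, every $\T'$-model is in particular a $\T$-model, so $M^{\T'}$ is a $\T$-model in $\cE[\T']$. By the universal property of $\cE[\T]$ in Theorem~\ref{cltop}, this $\T$-model is classified by a unique geometric morphism $f:\cE[\T']\to\cE[\T]$ satisfying $f^{*}(M^{\T})\cong M^{\T'}$. To see $f$ is an embedding (i.e.\ $f_{*}$ fully faithful), I would use the syntactic description: the additional axioms of $\T'$ force new covering sieves in the regular/geometric syntactic site, so $\cC_{\T'}^{\dag}$ is obtained from $\cC_{\T}^{\dag}$ by a quotient/refinement of topology, and the corresponding inclusion of sheaf categories realizes $\cE[\T']$ as a subtopos of $\cE[\T]$ (see \cite[D3.1]{El}). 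A cleaner way to check this is to note that for any topos $\cF$ the natural map $\text{\sf Hom}(\cF,\cE[\T'])\to \text{\sf Hom}(\cF,\cE[\T])$ induced by composition with $f$ is fully faithful (indeed both sides embed in $\Tmod(\cF)$ via Theorem~\ref{cltop}, and the inclusion $\T'\text{-Mod}(\cF)\subseteq\Tmod(\cF)$ is full). A geometric morphism with this property is an embedding.

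For the second step, the adjunction $f^{*}\dashv f_{*}$ between toposes restricts to an adjunction on internal abelian groups because both $f^{*}$ and $f_{*}$ preserve finite limits (and in particular preserve internal abelian group structure). Since $f_{*}$ is fully faithful on sheaves and a morphism of abelian group objects is a morphism of underlying sheaves compatible with the group operations, the induced functor $f_{*}:\Ab(\cE[\T'])\to\Ab(\cE[\T])$ is fully faithful as well; equivalently, the counit $f^{*}f_{*}\Rightarrow \mathrm{id}$ is a natural isomorphism on abelian groups. The functor $f^{*}:\Ab(\cE[\T])\to\Ab(\cE[\T'])$ is exact because its underlying functor on sheaves is exact (inverse image of a geometric morphism).

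With these ingredients, the Serre-quotient conclusion is exactly Gabriel's theorem on localizations of abelian categories: an exact functor $F:\cA\to\cB$ between abelian categories admitting a fully faithful right adjoint $G$ with $FG\cong\mathrm{id}$ exhibits $\cB$ as the Serre quotient $\cA/\ker(F)$, where $\ker(F)$ is a Serre (thick) subcategory. Applying this with $F=f^{*}$ and $G=f_{*}$ yields the stated Serre quotient $f^{*}:\Ab(\cE[\T])\twoheadrightarrow\Ab(\cE[\T'])$ with fully faithful section $f_{*}$.

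The one non-routine point, and the expected main obstacle, is the verification that the added axioms actually produce a \emph{subtopos} rather than just a geometric morphism with some weaker property; once that is pinned down (via the syntactic argument sketched above, or by invoking the corresponding statement in \cite[D3.1]{El}), all remaining pieces are formal consequences of the classifying topos theorem and standard localization theory of abelian categories.
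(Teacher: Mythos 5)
Your primary argument is essentially the same as the paper's: both establish that $\cE[\T']$ is a subtopos of $\cE[\T]$ by a syntactic argument (the extra axioms of $\T'$ enlarge the Grothendieck topology on the syntactic site $\cC_\T^\dag$, so $\cE[\T']\cong\Shv(\cC_\T^\dag,J_{\T'}^\T)\subset\Shv(\cC_\T^\dag,J_\T^\dag)=\cE[\T]$, which the paper attributes to Caramello's duality theorem), and then deduce the Serre-quotient/section structure on $\Ab(-)$ from $f^*f_*\cong\mathrm{id}$; your explicit appeal to Gabriel's characterization of exact localizations with fully faithful right adjoint is a useful unpacking of what the paper leaves terse. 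The only genuinely different ingredient is your ``cleaner'' alternative for the embedding step: you show $f$ is a $2$-monomorphism (since $\T'\text{-Mod}(\cF)\subseteq\Tmod(\cF)$ is full for every $\cF$, so $\mathrm{Hom}(-,\cE[\T'])$ is a full sub-$2$-functor of $\mathrm{Hom}(-,\cE[\T])$) and then invoke ``$2$-mono $\Rightarrow$ geometric inclusion.'' That last implication is true for Grothendieck toposes but is itself a nontrivial fact (it passes through ``$2$-monos are localic,'' via the diagonal, and then the identification of localic $2$-monos with sublocale inclusions), so it does not actually shortcut the argument: without a precise citation it is at least as much work as the syntactic route the paper uses, which is more self-contained and produces the explicit topology $J_{\T'}^\T$ directly. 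Either way your plan is correct.
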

\begin{proof}  Let $\cE [\T]$ be $\Shv (\cC_{\T}^\dag,J_{\T}^\dag)$ for $\dag$ = reg or gm and similarly for $\T^\prime$. By arguing as in the proof of the duality theorem of \cite{C} one can see that $\cE [\T^\prime]$ is a subtopos of $\cE [\T]$ via Theorem \ref{cltop}. Let $J_{\T^\prime}^{\T}$ be the associated $\T$-topology of $\T^\prime$, \ie  the smallest Grothendiek topology on $\cC_{\T}^\dag$ generated by  all the $J^\dag_\T$-covering sieves and the sieves containing a morphism corresponding to an axiom of $\T^\prime$. 
Thus a functor $\cC_{\T}^\dag\to \cF$ to a topos $\cF$ is left exact and $J_{\T^\prime}^{\T}$-continuous if and only if it is a model of $\T^\prime$. Using Lemma \ref{SGAlemma} then $\Shv (\cC_{\T}^\dag,J_{\T^\prime}^{\T})$ is the classifying topos of $\T^\prime$. Thus $$f= (f_*, f^*): \cE[\T^\prime]=\Shv (\cC_{\T^\prime}^\dag,J_{\T^\prime}^\dag)\cong \Shv (\cC_{\T}^\dag,J_{\T^\prime}^{\T}) \subset\Shv (\cC_{\T}^\dag,J_{\T}^\dag)=\cE [\T]$$
is an embedding, \ie $\cE[\T^\prime]$ is a subtopos of $\cE[\T]$. 
That means $f^*f_* \cong id$.  Thus the exact functor  $f^*: \Ab(\cE [\T])\to \Ab(\cE [\T^\prime])$ is essentially surjective,  $\Ab(\cE [\T^\prime])$ is the Serre quotient of $\Ab(\cE [\T])$ by $\ker f^*$ and $f_*$ is fully faithful. \end{proof}
\begin{remark} Note that $\cE [\T^\prime]$ is a localization of $\cE [\T]$ at the class of morphisms sent by $f^*$ to isomorphisms in $\cE [\T^\prime]$ and the corresponding local objects of $\cE [\T]$ are precisely the objects of $\cE [\T^\prime]$. Similarly, for $\Ab(\cE [\T^\prime])$ and $\Ab(\cE [\T])$ (\cf \cite[A.2.10]{NT})\end{remark}

\subsection{$I^+$-invariance and $cd$-exactness}\label{hinv} Assuming that $\cC$ has finite products let $1$ be the final object. Recall that Voevodsky \cite[\S 2.2]{Vh} call an interval of $\cC$ an object $I^+$ together with morphisms $m:I^+\times I^+\to I^+$ and $i_0,i_1:1\to I^+$ such that 
\begin{equation}\label{eqio}
m(i_0\times id) = m(id\times i_0) = i_0p\hspace*{0.5cm}
m(i_1\times id) = m(id\times i_1) = id
\end{equation}
where $p:I^+\to 1$ is the canonical morphism, $id$ is the identity of $I^+$ and $i_0 \times id:I^+\cong 1\times I^+\to I^+\times I^+$, etc. In general, consider and denote $i_k^X: X\cong X\times 1\to X\times I^+$ the morphism induced by $i_k$ for $k=0,1$ and  assume that
\begin{equation}\label{phi}
\xymatrix{Y\ar[r]^-{f}  \ar[d]_-{i_k^Y} & X \ar[d]^-{i_k^X} \\
Y\times I^+ \ar@{>}[r]^-{f\times id} & X\times I^+}
\end{equation}
are morphisms in $\cC^{\square}$ that we denote $\square^+_k:(X,Y)\to (X\times I^+,Y\times I^+)$ for $k=0, 1$.
\begin{defn}\label{homtopinv} Say that the homology theory $\T$ in \ref{homtop} is \emph{$I^+$-invariant} if additionally 
\begin{itemize}
\item[$\sqcup$5] $\square^+_{0,n}=\square^+_{1,n}:h_n(X,Y)\to h_n(X\times I^+,Y\times I^+)$, \ie we have that $$\top\vdash_x \square^+_{0,n}(x)=\square^+_{1,n}(x)$$
\end{itemize}
Denote $\T^+$ the resulting homotopy invariant regular theory on the same signature of $\T$.
\end{defn}

Further assume that $\cC$ has an initial object $\emptyset$. Assuming that $\emptyset \to X$ is in $\cM$ for each object $X$ of $\cC$ we denote $$h_n(X)\df h_n(X,\emptyset)$$
the associated sorts for $n\in \Z$. Assume that $\cC$ or a category $\cC'_{/X}$ of triangles over an object $X$ of $\cC$ is endowed with a $cd$-structure in the sense of Voevodsky \cite{Vcd}. Recall that a $cd$-structure on a category is a class of distinguished commutative squares which is stable by isomorphism. There is a corresponding Grothendieck $cd$-topology $X_{cd}$ associated to $\cC'_{/X}$ but we just consider the $cd$-structure here. Note that in a category $\cC$ with coproducts we may take the $cd$-structure given by the squares
$$\xymatrix{\emptyset \ar[r]^-{}  \ar[d]_-{} & X \ar[d]^-{} \\
Y \ar@{>}[r]^-{} & Y\coprod X}$$ 
that we call the coproduct $cd$-structure (and similarly for push-outs). In general, for a given $cd$-structure, considering each distinguished square 
$$\xymatrix{B\ar[r]^-{}  \ar[d]_-{} & C \ar[d]^-{} \\
A \ar@{>}[r]^-{} & D}$$ 
of $\cC'_{/X}$ we have function symbols in our signature $\Sigma$ in \ref{signature} for each side of the square but we need to enlarge $\Sigma$ by adding a function symbol $h_n(D)\to h_{n-1}(B)$, for each $n\in \Z$.
\begin{defn}\label{cdinv} Say that the homology theory $\T$ in \ref{homtop} is \emph{$cd$-exact} if for each $n\in \Z$ and each distinguished square we have that the canonical Mayer-Vietoris sequence
$$\cdots \to h_n(B)\to h_n(A)\times h_n(C) \to h_n(D)\to h_{n-1}(B)\to\cdots$$
\begin{itemize}
\item[$\sqcup$6] is exact and 
\item[$\sqcup$7] it is natural with respect to morphisms of distinguished squares.
\end{itemize} 
\end{defn}
Similarly to the above, as an exercise for the interested reader, one can express in a more formal way the axioms for $cd$-exactness. 
\section{Models of (co)homology theories}\label{models}
Consider a given $\cC$ and $\cC^{\square}$ a category of pairs and let $H\in \Tmod(\cD)$ be a model (of the homology theory $\T$ defined in \ref{homtop}) in a regular category $\cD$.
\subsection{Basic properties}We then have, for each morphism $Y \to X$ in $\cM$ and $n\in \Z$, an internal (abelian) group $H_n(X,Y)\in \cD$ which depends functorially on pairs, \ie we have a functor 
$$(X,Y) \leadsto H_*(X,Y)\df \{H_n(X,Y)\}_{n\in \Z}$$
from $\cC^{\square}$ to $\Z$-families of internal (abelian) groups in $\cD$.  For a $\partial$-morphism given by $Z\to Y\to X$ in $\cM$ we have a long exact sequence of (abelian) groups in $\cD$ 
$$\cdots\to H_n (Y,Z)\to H_n (X,Z)\to H_n (X,Y)\to H_{n-1} (Y,Z)\to \cdots$$
which is natural with respect to $\partial$-cubes \eqref{dcube} by \ref{axiom}~$\sqcup$3-4.

Dually, $H\in \Top(\cD)$ consists of a contravariant functor 
$$(X,Y) \leadsto H^*(X,Y)\df \{H^n(X,Y)\}_{n\in \Z}$$
from $\cC^{\square}$ to $\Z$-families of internal (abelian) groups in $\cD$ such that for a $\partial$-morphism given by $Z\to Y\to X$ in $\cM$ we have 
$$\cdots\to H^n (X,Y)\to H^n (X,Z)\to H^n (Y,Z)\to H^{n+1} (X,Y)\to \cdots$$
which is natural with respect to $\partial$-cubes. 
\begin{lemma}\label{purity}
Let $\cC$ and $\cM$ be as above. If $H\in \Tmod (\cD)$ (resp.  $H\in \Top(\cD)$) then $H_*(X, Y) =0$ (resp. $H^*(X, Y) =0$) for all $Y \cong X$ isomorphisms in $\cM$.
\end{lemma}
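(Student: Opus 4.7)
The plan is to reduce the vanishing to the degenerate pair $(X,X)$ and then to kill the latter using the complex part of the exactness axiom $\sqcup$4 applied to a composition of identity morphisms.

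For the first step, given an isomorphism $f\colon Y\to X$ in $\cM$, I would exhibit an isomorphism $(X,Y)\cong (X,X)$ in $\cC^{\square}$, where $(X,X)$ denotes the pair associated to $id_X$. The required mutually inverse squares have horizontal arrows $f,\,id_X$ in one direction (with vertical components $f$ and $id_X$) and $id_X,\,f$ in the other direction (with vertical components $f^{-1}$ and $id_X$). Both squares commute in $\cC$, and crucially a morphism of $\cC^{\square}$ only requires its vertical arrows to lie in $\cC$, not in $\cM$, which legitimizes the use of $f^{-1}$. A routine composition check shows that the two composites are the identity squares of $(X,Y)$ and of $(X,X)$. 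The functoriality axiom $\sqcup$3 applied to the model $H$ then produces $H_n(X,Y)\cong H_n(X,X)$ for every $n\in\Z$.

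For the second step, I would apply the axioms to the composable pair $X\xrightarrow{id_X} X\xrightarrow{id_X} X$ in $\cM$. In the canonical factorization of $\partial$ recalled in~\ref{pairs}, both auxiliary squares $\boxtimes,\boxplus\colon (X,X)\to (X,X)$ reduce to $\boxdot$, the identity morphism of $(X,X)$ in $\cC^{\square}$, so axiom $\sqcup$3 (namely $\top\vdash_x \boxdot_n(x)=x$) makes each of them act as $id_{H_n(X,X)}$. The complex condition $\boxplus_n\circ \boxtimes_n=0$ from $\sqcup$4 then reads $id=0$ on $H_n(X,X)$, forcing $H_n(X,X)=0$, and combined with the first step yields $H_n(X,Y)=0$.

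The cohomology case $H\in\Top(\cD)$ is obtained by the formally dual argument over the opposite signature $\Sigma^{op}$, with the directions of all squares and of $\partial$ reversed. No step presents a real obstacle; the one point that deserves attention is the observation that morphisms of $\cC^{\square}$ admit arbitrary vertical arrows in $\cC$, which is what makes the reduction to $(X,X)$ through $f^{-1}$ available even when the subcategory $\cM$ is not assumed to be closed under inverses of isomorphisms.
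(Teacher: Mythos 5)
Your proof is correct and follows essentially the same route as the paper's: reduce $H_*(X,Y)$ to $H_*(X,X)$ by exhibiting an explicit isomorphism in $\cC^{\square}$ (where, as you rightly stress, vertical arrows need only live in $\cC$, so $f^{-1}$ is admissible), then kill $H_*(X,X)$ by observing that $\boxtimes$ and $\boxplus$ degenerate to $\boxdot$ for the chain $X\xrightarrow{id}X\xrightarrow{id}X$, whence the complex identity $\boxplus_n\boxtimes_n=0$ forces $id=0$. The only cosmetic difference is that the paper phrases the last step as an appeal to the exactness of $H_*(X,X)\xrightarrow{id}H_*(X,X)\xrightarrow{id}H_*(X,X)$, whereas you correctly note that the degree-zero (complex) part of $\sqcup 4$ already suffices.
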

\begin{proof} Note that an isomorphism $(X,Y) \cong (X,Y')$ such that 
$$\xymatrix{Y\ar[r]^-{f}  \ar[d]_-{\iota} & X \ar[d]^-{||} \\
Y' \ar@{>}[r]^-{g} & X}$$
with $\iota$ isomorphism of $\cC$ yields $H_*(X,Y)\cong H_*(X,Y')$ by functoriality. If $g$ is also an iso of $\cM$,  $\iota = g^{-1}$ and $f=id_X$ we get $H_*(X,Y')\cong H_*(X,X)$. Now from the exactness of 
$$H_*(X,X) \by{id} H_*(X,X) \by{id} H_*(X,X)$$
for $X=Y=Z$ we obtain $H_*(X,X) =0$.\end{proof}

Further assume that $\cC$ has an initial object $\emptyset$ and $\emptyset \to X$ is in $\cM$ for each $X$ object of $\cC$. We may wish that if $X\to \emptyset$ is in $\cM$ then $X\cong\emptyset$ (for example, if $\cM =\cC$ that means $\emptyset$ strictly initial).
Denote 
\begin{equation}\label{zero}
H_*(X,\emptyset) \df H_*(X)
\end{equation}
and note that $H_*(\emptyset) =0$. Moreover, consider composable arrows
$$W\to Z\to Y\to X$$
in the category $\cM$ providing a pair of $\partial$-morphims.
\begin{lemma}\label{2=0}
 Let $H\in \Tmod (\cE)$ be any model of $\T$ in a regular category $\cE$.
If $(Z,W)\to (Y, Z)\to (X,Y)$ are $\partial$-morphisms then $\partial_*^2=0$, \ie the composition
$$H_*(X,Y)\to H_{*-1}(Y, Z)\to H_{*-2}(Z,W)$$
is the zero morphism of abelian group objects.
\end{lemma}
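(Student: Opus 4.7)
The plan is to exploit the auxiliary triple $W\to Y\to X$, together with naturality of $\partial$ along $\partial$-cubes, in order to factor the first boundary through a morphism that the second boundary annihilates. More precisely, the aim is to produce the identity
$$\partial^{Z,Y,X}_n = \boxplus^{W,Z,Y}_{n-1}\circ \partial^{W,Y,X}_n,$$
where $\boxplus^{W,Z,Y}:(Y,W)\to (Y,Z)$ is the canonical component of the factorization $\partial^{W,Z,Y}=\boxplus^{W,Z,Y}\circ\boxtimes^{W,Z,Y}$ from \ref{pairs}. Axiom $\sqcup 4$ applied to the triple $W\to Z\to Y$ then gives $\partial^{W,Z,Y}_{n-1}\circ \boxplus^{W,Z,Y}_{n-1}=0$, and the desired vanishing follows by composition.

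First I note that the pair $(Y,W)$ is available in $\cC^\square$, since $W\to Y$ is the composite of $W\to Z$ and $Z\to Y$ and $\cM$ is closed under composition. This makes sense of the boundary $\partial^{W,Y,X}:(Y,W)\to (X,Y)$ attached to the triple $W\to Y\to X$. The key combinatorial step is then to check that the square
$$\xymatrix{(Y,W) \ar[r]^-{\partial^{W,Y,X}} \ar[d]_-{\boxplus^{W,Z,Y}} & (X,Y) \ar[d]^-{\boxdot} \\ (Y,Z) \ar[r]_-{\partial^{Z,Y,X}} & (X,Y)}$$
is a $\partial$-cube in the sense of \eqref{dcube}. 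Unwinding the definitions, both composite morphisms $(Y,W)\to (X,Y)$ in $\cC^\square$ are represented by the same commutative square in $\cC$: the one with top and left edges both equal to the composite $W\to Z\to Y$, and with bottom and right edges both equal to $Y\to X$. This verification is where the routine bookkeeping lives, and is what I expect to be the main obstacle.

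Granted the $\partial$-cube, the naturality part of axiom $\sqcup 3$, applied to the model $H$, immediately yields the displayed factorization of $\partial^{Z,Y,X}_n$, since the right-hand vertical is the identity $\boxdot$. Composing on the left with $\partial^{W,Z,Y}_{n-1}$ and invoking the complex relation $\partial^{W,Z,Y}_{n-1}\circ \boxplus^{W,Z,Y}_{n-1}=0$ from axiom $\sqcup 4$ for the triple $W\to Z\to Y$ produces $\partial^{W,Z,Y}_{n-1}\circ \partial^{Z,Y,X}_n=0$ in $\cE$, which is the claimed identity of morphisms of abelian group objects.
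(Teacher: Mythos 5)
Your proof is correct, and it follows a genuinely different route from the paper's. The paper's argument factors both boundaries through the ``absolute'' pairs $(Y,\emptyset)$ and $(Z,\emptyset)$: it uses two $\partial$-cubes
\[
\xymatrix@R=14pt@C=14pt{(Y,\emptyset)\ar[r]^-{\partial}\ar[d]&(X,Y)\ar[d]^{||}\\(Y,Z)\ar[r]^-{\partial}&(X,Y)}
\qquad
\xymatrix@R=14pt@C=14pt{(Z,\emptyset)\ar[r]^-{\partial}\ar[d]&(Y,Z)\ar[d]^{||}\\(Z,W)\ar[r]^-{\partial}&(Y,Z)}
\]
to rewrite $\partial_*^2$ as $H_*(X,Y)\to H_{*-1}(Y)\xrightarrow{\boxplus} H_{*-1}(Y,Z)\xrightarrow{\partial} H_{*-2}(Z)\to H_{*-2}(Z,W)$, with the vanishing supplied by the middle composition via $\sqcup$4 for $\emptyset\to Z\to Y$. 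You instead use a single $\partial$-cube built from the pair $(Y,W)$, factoring the first boundary as $\boxplus^{W,Z,Y}_{*-1}\circ\partial^{W,Y,X}_*$ and then killing the composite by the complex relation $\partial\circ\boxplus=0$ from $\sqcup$4 for $W\to Z\to Y$. Your verification of the $\partial$-cube is the only non-formal step and it checks out: both composites $(Y,W)\to(X,Y)$ in $\cC^{\square}$ are the square with top and left the composite $W\to Z\to Y$ and bottom and right the map $Y\to X$. What your route buys you, beyond economy (one $\partial$-cube instead of two), is independence from the initial object $\emptyset$: the paper's argument uses the ambient hypothesis that $\emptyset$ exists and that $\emptyset\to X$ lies in $\cM$, whereas yours only needs $\cM$ to be closed under composition so that $(Y,W)\in\cC^{\square}$, which always holds. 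Thus your proof establishes the lemma under slightly weaker hypotheses.
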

\begin{proof} Consider the following $\partial$-cubes \eqref{dcube} of $\cC^\square$
$$
\xymatrix{(Y,\emptyset )\ar[r]^-{\partial}  \ar[d]_-{} & (X,Y) \ar[d]^-{||} \\
(Y,Z)\ar@{>}[r]^-{\partial } & (X,Y)} \hspace{2cm}
\xymatrix{(Z,\emptyset )\ar[r]^-{\partial}  \ar[d]_-{} & (Y,Z) \ar[d]^-{||} \\
(Z,W)\ar@{>}[r]^-{\partial } & (Y, Z)}
$$
By naturality we get the following commutative squares
$$
\xymatrix{H_*(X,Y)\ar[r]^-{\partial_*}  \ar[d]_-{||} & H_{*-1} (Y) \ar[d]^-{} & \\
H_*(X,Y)\ar[r]^-{\partial_*} & H_{*-1}(Y,Z)\ar[r]^-{\partial_*}  &  H_{*-2}(Z,W)\\
& H_{*-1}(Y,Z)\ar[u]_-{||} \ar[r]^-{\partial_*} & H_{*-2}(Z)\ar[u]}
$$
By exactness we then get that $\partial_*^2=0$ as claimed.
\end{proof}

For example, in all cases listed below but $\cM=\cC$, we consider the subcategory $\cM$ with the same objects of $\cC$ and such that a morphism $f$ is in $\cM$ if $f$ is a mono or is in a distinguished class of monos including all isomorphisms of $\cC$. Also note that if $\cM$ are just isomorphisms of $\cC$ then $H_*(X)$ are the only possibly non-zero homology groups. 

\subsection{Grothendieck exact $\partial$-functors} Let $\cC$ and $\cD$ be abelian categories. Let $\cM$ be given by the monos of $\cC$. For an exact covariant homological $\partial$-functor  $T_n$ from $\cC$ to $\cD$ let $$H_n(X,Y)\df T_n(X/Y)$$ Thus $H\in \Tmod (\cD)$ from the definition of $\partial$-functor (see \cite[2.1.1]{WH}). In fact, given $Z\into Y\into X$ we clearly have a short exact sequence 
$$0\to Y/Z \to X/Z \to X/Y\to 0$$
and therefore a long exact sequence 
$$\cdots \to T_n(Y/Z)\to T_n(X/Z)\to T_n(X/Y)\to T_{n-1}(Y/Z)\to \cdots$$
which is natural with respect to $\partial$-cubes so that all axioms $\sqcup$1-4 are satisfied. Furthermore $H_n(X, Y)=T_n(X/Y)=0$ for $n<0$. For an exact contravariant cohomological $\partial$-functor $T^n$ from $\cC$ to $\cD$ setting $H^n(X,Y)\df T^n(X/Y)$ we get $H\in \Top (\cD)$.

\subsection{Barr-Beck homology}  Let ${\sf G} =(G, \varepsilon, \delta)$ be a cotriple in $\cC$. Let $E:\cC\to \cD$ be a functor where $\cD$ is an abelian category and $\cM= \cC$. Let $$H_n(X,Y)\df H_n(Y\to X,E)_{\sf G}$$
be the relative Barr-Beck homology with coefficients in $E$ with respect to the cotriple ${\sf G}$ (see \cite[8.7.1]{WH}). Here also $H_n(X,Y)=0$ for $n<0$. Since this relative homology is given by a cone construction is clear that $H\in \Tmod (\cD)$.

\subsection{Connes cyclic homology} We let $\cC$ be the abelian category of cyclic objects in an  abelian category $\cA$ and let $\cM$ be the monos of $\cC$ (see \cite[9.6.4]{WH}). For $Y\into X$  let $$HC_n(X,Y)\df \Tot CC_{\d \d}(X/Y)$$ 
be cyclic homology where $CC_{\d \d}$ is the Tsygan double complex (see \cite[9.6.6-7]{WH}). Then $HC\in \Tmod (\cA)$.
 
\subsection{Tate cohomology} Let $\cC =\Gmod$ be the abelian category of 
$G$-modules with $G$ finite and $\cD= \Ab$. Let $\cM$ be the monos of $\cC$ and set
$$H_{-n}(X,Y) \df \hat{H}^{n}(G, X/Y)$$
the Tate cohomology of the $G$-module $X/Y$. We have $H\in \Tmod (\Ab)$ and $H_n(X, Y) \neq 0$ for $n \in \Z$. 

\subsection{Singular (co)homology}\label{singhom}
For $\cC = {\rm Top}$ the category of topological spaces and $\cM$ the subcategory of topological embeddings let ${\rm Sing}_\d(X)$ be the singular chain complex. Then $f: Y\into X\in \cM$ induces an inclusion ${\rm Sing}_\d(Y)\into {\rm Sing}_\d(X)$ so that 
$$H_{n}^{\rm sing}(X,Y) \df H_{n}({\rm Sing}_\d(X)/{\rm Sing}_\d(Y))$$
yields a model $H^{\rm sing}\in \Tmod (\cS)$ as it is well known that the axioms $\sqcup$1-4 are satisfied in this case. 
Similarly, for $\cC=\cM$ the category of simplicial topological spaces and $H^*(X,Y)$ the relative cohomology of the constant sheaf $\Z$ the axioms $\sqcap$1-4 are satisfied. 

For $\cC' =\Sch_k$ the category of $k$-algebraic schemes where $k\into \C$ is a subfield of the complex numbers and $\cM'$ the subcategory of closed embeddings we have the forgetful functor of $\C$-points $f:Y \to X\leadsto Y(\C)\to X(\C)$ from $\Sch_k$ to ${\rm Top}$ sending a closed embedding to a closed subspace. We may thus consider the restricted signature (as in Remark \ref{resign}) whence
$$(X, Y)\leadsto \{H_{n}^{\rm sing}(X (\C),Y (\C))\}_{n\in \Z}$$ is a model $H^{\rm sing}\in \Tmod (\cS)$ over the resctricted signature as well. Dually, we have $H_{\rm sing}\in \Top (\cS)$ for singular cohomology.

\subsection{Algebraic singular homology} \label{algsinghom}
Let $\cC = \Sch_k$ be the category of $k$-algebraic schemes and $\cM$ closed subschemes (with the reduced structure). Let $\cD = {\rm PST}$ be the abelian category of additive presheaves of abelian groups on the additive category $\Cor_k$ of Voeovodsky finite correspondences (see \cite[Def. 2.1 \& Th. 2.3]{VL}). For $X\in \Sch_k$ let $\Z_{tr}(X)\in {\rm PST}$ be the representable presheaf $U\leadsto \Cor_k(U, X)$ for $U$ smooth $k$-algebraic scheme (\cf \cite[2.11]{VL}). For $F\in {\rm PST}$ let $C_n (F)(U)\df F(U\times_k \Delta^n)$ where $\Delta^n = \Spec (k[t_0, \ldots , t_n]/(t_0+\cdots+t_n -1))$ is a cosimplicial $k$-scheme so that we obtain a chain complex 
$$\cdots \to F(U\times_k \Delta^2)\to F(U\times_k \Delta)\to F(U)\to 0$$
yielding $C_\d (F)\in {\rm PST}$ (see \cite[2.14]{VL}). For the Suslin-Voevodsky singular chain complex  ${\rm Sing}_\d^{\rm SV}  (X)\df C_\d (\Z_{tr}(X))$  we  have that ${\rm Sing}_\d^{\rm SV}  (Y)\into {\rm Sing}_\d^{\rm SV}  (X)$ if $Y\into X$ is a closed subscheme so that 
$$H_{n}^{\rm SV}(X,Y) \df H_{n}({\rm Sing}_\d^{\rm SV}(X)/{\rm Sing}_\d^{\rm SV}(Y))$$
yields a model $H^{\rm SV}\in \Tmod ({\rm PST})$ with a similar proof as for classical singular homology.

\subsection{Homotopy and Mayer-Vietoris} If $\cC$ is provided with an interval object $I^+$ (with the notation adopted in \ref{hinv}) define an \emph{$I^+$-homotopy} between two parallel maps $\square^0$ and $\square^1$ from $(X,Y)$ to $(X',Y')$ as usual via a morphism $$(X\times I^+,Y\times I^+)\to (X',Y')$$ and a factorization through $\square^+_k:(X,Y)\to (X\times I^+,Y\times I^+)$ for $k=0, 1$ respectively.  If $H$ is a model of the $I^+$-invariant theory $\T^+$ in \ref{homtopinv} then $I^+$-homotopy maps induce the same map $$\square^0_*=\square^1_*:H_*(X,Y)\to H_*(X',Y')$$ and furthermore we have the following standard fact:
\begin{lemma} $H\in\Tmod(\cD)$ is a model of $\T^+$ if and only if 
$$\Pi_*:H_*(X\times I^+,Y\times I^+)\longby{\simeq} H_*(X,Y)$$
where $\Pi: (X\times I^+,Y\times I^+)\to (X,Y)$ is the canonical projection.
\end{lemma}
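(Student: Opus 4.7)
The plan is to use axiom $\sqcup$5 applied at two different pairs, combined with the interval identities \eqref{eqio}.

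For the \emph{if} direction I would argue: since $\Pi\circ i_k^X = id_X$ and analogously for $Y$, one has $\Pi\circ\square^+_k = \boxdot_{(X,Y)}$, hence $\Pi_*\circ\square^+_{k,n} = id$ on $H_n(X,Y)$ for $k=0,1$. If $\Pi_*$ is invertible this forces $\square^+_{0,n}=\square^+_{1,n}=\Pi_*^{-1}$, which is axiom $\sqcup$5.

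For the \emph{only if} direction, the same relations show that $\Pi_*$ is a split epimorphism; to upgrade this to an isomorphism I would construct an explicit ``homotopy'' out of $m: I^+\times I^+\to I^+$. Namely, I define $\mu: (X\times I^+\times I^+, Y\times I^+\times I^+)\to (X\times I^+, Y\times I^+)$ in $\cC^\square$ by $id_X\times m$ on the ambient object and $id_Y\times m$ on the subobject, compatibility being automatic from functoriality of $\times$. If $\square^+_k{}'$ denotes the $I^+$-inclusion of the pair $(X\times I^+, Y\times I^+)$ at the newly introduced third factor, the identities \eqref{eqio} give
$$\mu\circ\square^+_0{}' = \square^+_0\circ\Pi, \qquad \mu\circ\square^+_1{}' = \boxdot_{(X\times I^+, Y\times I^+)}.$$
Applying axiom $\sqcup$5 at the pair $(X\times I^+, Y\times I^+)$ yields $\square^+_{0,n}{}'=\square^+_{1,n}{}'$, and postcomposing both sides with $\mu_*$ then reading off the two displayed identities above produces $\square^+_{0,n}\circ\Pi_* = id$ on $H_n(X\times I^+, Y\times I^+)$. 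Combined with the split-epi relation from the first paragraph, this exhibits $\square^+_{0,n}$ as a two-sided inverse to $\Pi_*$.

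The main obstacle is pure bookkeeping: three $I^+$-factors are in play, so one must be careful to apply axiom $\sqcup$5 at the correct enlarged pair and to keep straight which inclusion $\square^+_k{}'$ introduces the new factor versus in which position $\mu$ contracts via $m$. Once this is arranged, everything reduces to unwinding \eqref{eqio} and invoking functoriality of $H_*$ on $\cC^\square$. The argument is a categorical transcription of the classical proof that a cylinder $X\times [0,1]$ deformation-retracts to $X$, and specializes to it for $\cC=\mathrm{Top}$ with $I^+=[0,1]$.
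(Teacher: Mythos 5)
Your proof is correct and follows essentially the same route as the paper's: the split‑epi observation $\Pi\circ\square^+_k=\boxdot$ for the easy direction, and for the converse the ``cylinder contraction'' $id\times m$ (which the paper calls $\Lambda$, you call $\mu$) together with the two identities derived from \eqref{eqio} and an application of $\sqcup$5 at the enlarged pair $(X\times I^+,Y\times I^+)$. Your explicit distinction between $\square^+_k$ and $\square^+_k{}'$ is a notational clarification of the paper's argument (and incidentally corrects a small typo there, where $\Lambda\square^+_1$ is written as $id_{(X,Y)}$ rather than $id_{(X\times I^+,Y\times I^+)}$), but the underlying proof is identical.
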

\begin{proof} The projection $\Pi$ is such that $\Pi \square^+_k = id_{(X,Y)}$ for both $k=0,1$ in \eqref{phi}. Thus $\Pi_*$ iso implies
$\square^+_{0,*} = \square^+_{1,*}$. Conversely, to see that $\square^+_{0,*}\Pi_* = id$ one can make use of the induced  relations \eqref{eqio} by taking product with $f:Y\to X$. Denoting by $\Lambda$ the following morphism 
$$\xymatrix{Y\times I^+\times I^+\ar[r]^-{f\times id}  \ar[d]_-{id\times m} & X\times I^+\times I^+ \ar[d]^-{id\times m} \\
Y\times I^+ \ar@{>}[r]^-{f\times id} & X\times I^+}
$$
of $\cC^{\square}$ we get that $\Lambda \square^+_{0}=\square^+_{0}\Pi$ and $\Lambda \square^+_{1}=id_{(X,Y)}$. Thus 
$\Lambda_* \square^+_{0,*}= \Lambda_* \square^+_{1,*} = \square^+_{0,*}\Pi_* = id$ if $H$ is a $\T^+$-model.
\end{proof}
Both singular homology $H^{\rm sing}$ and algebraic singular homology $H^{\rm SV}$ are models of $\T^+$ by taking $I^+$ the real interval $[0, 1]$ and the affine line $\Aff^1_k$ respectively: in both cases $m$ is the multiplication, $i_0$ and $i_1$ are the (rational) points $0$ and $1$ (\cf \cite[2.19]{VL}). 

Furthermore, they are $cd$-exact with respect to several $cd$-structures. For example, by considering the $cd$-structure given by the following squares over a fixed $X$  
$$\xymatrix{U\cap V\ar[r]^-{}  \ar[d]_-{} & V \ar[d]^-{} \\
U \ar@{>}[r]^-{} & U\cup V}$$ 
where $U\into X$ and $V\into X$ are in the category $\cC'_{/X}$ of open embeddings for $\cC = {\rm Top}$ and of Zariski open for $\cC = \Sch_k$ we get that both $H^{\rm sing}$ and $H^{\rm SV}$ satisfies $cd$-exactness: this is a reformulation of the usual Mayer-Vietoris long exact sequences.  
\begin{lemma} Any $\T$-model $H$ in an abelian category is $cd$-exact with respect to the $cd$-structure given by those commutative squares of $\cM$
$$\xymatrix{Y\ar[r]^-{}  \ar[d]_-{} & X_1 \ar[d]^-{} \\
X_2\ar@{>}[r]^-{} & X}$$ 
such that $$H_*(X_1,Y)\oplus H_*(X_2,Y)\longby{\simeq} H_*(X,Y)$$
\end{lemma}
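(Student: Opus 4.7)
I plan to obtain the Mayer--Vietoris sequence as a Barratt--Whitehead-style consequence of the two long exact sequences attached by axioms $\sqcup$3--4 to the triples $\emptyset\to Y\to X_1$ and $\emptyset\to X_2\to X$, linked by the morphism of pairs $(X_1,Y)\to(X,X_2)$ coming from the square. The crux will be that the hypothesized direct-sum decomposition $H_*(X_1,Y)\oplus H_*(X_2,Y)\iso H_*(X,Y)$ forces this linking map to be a homology isomorphism, which is exactly the excision input needed.

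The first step is this excision identification. The $\partial$-morphism $Y\to X_2\to X$ yields, via $\sqcup$4, the exact sequence
\[
H_n(X_2,Y)\to H_n(X,Y)\to H_n(X,X_2)\to H_{n-1}(X_2,Y)\to H_{n-1}(X,Y).
\]
By hypothesis each map $H_m(X_2,Y)\to H_m(X,Y)$ is the inclusion of the second summand of the direct sum, so it is a split monomorphism in the ambient abelian category. The two outer maps above are therefore injective and the middle term is identified with the cokernel, giving
\[
H_n(X,X_2)\cong H_n(X,Y)/H_n(X_2,Y)\cong H_n(X_1,Y).
\]
The natural map $H_n(X_1,Y)\to H_n(X,X_2)$ induced by the pair morphism $(X_1,Y)\to(X,X_2)$ factors this isomorphism through $H_n(X,Y)$ and so realizes it.

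Next, the square provides a morphism of triples $(\emptyset\to Y\to X_1)\to(\emptyset\to X_2\to X)$. By naturality in $\sqcup$3 with respect to $\partial$-cubes, the long exact sequences of $(X_1,Y)$ and of $(X,X_2)$ fit into a commutative ladder whose vertical maps are $f_{2*}\colon H_n(Y)\to H_n(X_2)$, $g_{1*}\colon H_n(X_1)\to H_n(X)$ and the excision isomorphism $H_n(X_1,Y)\iso H_n(X,X_2)$ identified in the previous paragraph. The Barratt--Whitehead lemma, valid in any abelian category (snake lemma plus a short diagram chase), then splices this ladder into the exact sequence
\[
\cdots\to H_n(Y)\to H_n(X_1)\oplus H_n(X_2)\to H_n(X)\to H_{n-1}(Y)\to\cdots
\]
whose maps come out as $y\mapsto (f_{1*}y,f_{2*}y)$, $(a,b)\mapsto g_{1*}a-g_{2*}b$ and the composite of $H_n(X)\to H_n(X,X_2)$, the inverse excision iso, and the connecting $\partial_1\colon H_n(X_1,Y)\to H_{n-1}(Y)$ from the LES of $(X_1,Y)$. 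This verifies axiom $\sqcup$6.

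Naturality (axiom $\sqcup$7) will then be automatic: a morphism of distinguished squares induces a morphism of the underlying triples and of the linking pair $(X_1,Y)\to(X,X_2)$, and preserves the direct-sum decomposition. By $\sqcup$3 the ladder above is functorial in such morphisms, and the Barratt--Whitehead splicing is itself natural in ladders whose third column is an isomorphism, so the resulting Mayer--Vietoris long exact sequence is natural in the distinguished square. The only substantive step is the excision identification in the first paragraph, and even there the work is essentially dictated by the direct-sum hypothesis together with $\sqcup$4; everything else is formal homological algebra in the abelian category.
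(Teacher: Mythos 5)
Your proof is correct and fills in precisely the diagram chase that the paper only gestures at (the paper's own proof is a single sentence referring to the Mayer--Vietoris argument for Barr--Beck homology). Deriving the excision isomorphism $H_*(X_1,Y)\iso H_*(X,X_2)$ from the split-mono hypothesis via the long exact sequence of the triple $Y\to X_2\to X$, and then splicing the two long exact sequences of $(X_1,Y)$ and $(X,X_2)$ along the $\partial$-cube by Barratt--Whitehead, is exactly the standard argument being invoked.
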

\begin{proof} The proof is an easy diagram chase: for example, it is exactly the same for proving Mayer-Vietoris Theorem for Barr-Beck homology.\end{proof}

\subsection{Weil first-order (co)homology theories}
A general question arising from the above and addressed to the experts in model theory is to put (co)homology theories in the framework of first-order (geometric/regular) theories which are extensions of $\T$ and to study the corresponding motivic topos.  Fragments of Grothendieck-Weil (co)homology theory and shadows of Grothendieck-Verdier duality are included. 

Let $\cC$ be the category of algebraic schemes over $k = \bar k$ with proper morphisms and let $\cM$ be the subcategory given by closed subschemes.  Then Borel-Moore \'etale homology $H^{\et}_*$ of pairs is a $\T$-model in sets (see \cite{Let} for properties of \'etale homology).

Recall that for a pair $(X,Y)$ and $U=X\setminus Y$ the open complement we have $H^{\et}_*(X,Y)= H^{\et}_*(U)$ which are finitely generated modules. Further $H_i^\et(X) = 0$ for $i< 0$ and $i > 2d$ where $d = \dim (X)$ and $H_0^\et(X)$ is the free module on the proper connected components while $H_{2d}^\et(X)$ is the free module on $d$-dimensional irreducible components. A K\"unneth formula holds for \'etale homology. For smooth projective algebraic varieties $X$ over $k$ we have a Poincar\'e duality isomorphism
$$H^{2d-i}_\et(X)\cong H_i^\et(X)$$ where $d\df \dim (X)$. Actually, we notably have a key result due to A. Macintyre \cite{M} justifying the following:
\begin{thm} Weil cohomologies are models of a first-order theory $\T_W^{op}$ which can be regarded as an extension of $\T^{op}$ and equivalent to Kleiman's axiomatization.
\end{thm}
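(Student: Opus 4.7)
The plan is to construct $\T_W^{op}$ by enriching the cohomological signature $\Sigma^{op}$ from \S\ref{signature} and adding axioms that match, one for one, the items of Kleiman's list. First I would restrict the base category $\cC$ to smooth projective varieties over $k = \bar k$ with $\cM$ the closed embeddings (and make the Tate twist implicit in an indexing of sorts), so that $\T^{op}$ already provides the underlying graded group-valued pair functor and the long exact sequences. I would then enlarge $\Sigma^{op}$ by adjoining: a binary function symbol $\cup : h^i(X) \times h^j(X) \to h^{i+j}(X)$ for cup product, a constant $1_X : h^0(X)$, a unary trace symbol $\tr_X : h^{2d}(X) \to h^0(\Spec k)$ for each variety of dimension $d$, Künneth comparison symbols $h^*(X) \otimes h^*(Y) \to h^*(X\times Y)$, a cycle class symbol $c^i_X : Z^i(X) \to h^{2i}(X)$ from the (externally parameterised) group of algebraic cycles, and a Lefschetz operator $L : h^i(X) \to h^{i+2}(X)$ associated to a chosen hyperplane class.

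Next I would translate Kleiman's axioms into first-order sequents over this enlarged signature: graded-commutativity and associativity of $\cup$ are regular; functoriality of $c$ under pullback and push-forward is regular; the Künneth formula becomes a first-order sequent asserting the Künneth morphism is an isomorphism; Poincaré duality is the sequent expressing that the pairing $\cup$ followed by $\tr_X$ is non-degenerate; weak and hard Lefschetz are first-order sequents asserting that the appropriate iterates of $L$ are isomorphisms. Each of these extends $\T^{op}$ strictly, by Lemma \ref{fadd}, so the resulting theory $\T_W^{op}$ fits into the framework of \S\ref{rehomth}.

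The main obstacle is the finite-dimensionality requirement on the coefficient field $K$: a priori, ``$h^i(X)$ is a finite-dimensional $K$-vector space'' is not a single first-order sentence. This is exactly the point where Macintyre's theorem \cite{M} intervenes: one expresses the algebra and field structure on $K$ together with the fact that each $h^i(X)$ is a $K$-module of bounded rank (which is a concrete numerical bound, coming from $\dim X$ and Kleiman's cycle-class axioms) by a schema of first-order sequents, one for each admissible dimension. Combined with the vanishing sequents $h^i(X) = 0$ for $i < 0$ or $i > 2\dim X$ already derivable in $\T^{op}$, this produces a genuine first-order axiomatization in the sense of \cite[D1.1.3]{El}.

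Finally, I would verify the Morita equivalence with Kleiman's axiomatization by comparing models in $\cS$: given $H \in \T_W^{op}\text{-{\sf Mod}}(\cS)$, the enlarged signature provides exactly the data (graded algebra, trace, Künneth, cycle map, Lefschetz operator) that Kleiman requires, and the axioms force precisely Kleiman's relations; conversely, any Weil cohomology in Kleiman's sense produces such a model. Thus the categories of models coincide, and Borel–Moore \'etale cohomology in particular, being covered by the classical verifications of \cite{Let} together with Poincar\'e duality for smooth projective $X$, yields a distinguished model of $\T_W^{op}$ in $\cS$.
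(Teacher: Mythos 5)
Your proposal takes essentially the same approach as the paper's proof: both construct $\T_W^{op}$ by enlarging the cohomological signature and both rely on Macintyre \cite{M} as the decisive model-theoretic input. The paper's own proof is considerably terser --- it simply notes that the theorem is ``a reformulation of \cite[\S 3]{M}'' with $\Sigma$ enlarged by a sort for each smooth projective variety, a function symbol for each morphism, and a coefficient field sort $K$ --- while you spell out the concrete function symbols (cup product, trace, K\"unneth, cycle class, Lefschetz) and the bounded-rank schema handling finite-dimensionality, details the paper delegates entirely to Macintyre's account.
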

\begin{proof} This is a reformulation of \cite[\S 3]{M}. The signature $\Sigma$ is enlarged by including a sort for each such a variety and a function symbol for a morphism and a coefficient field sort $K$ if we want to take care of coefficients.
\end{proof} 

As a consequence we can call \emph{Grothendieck-Weil topos} the motivic topos $\cE[\T_W^{op}]$.  However, there are weaker versions of Weil cohomologies considered by Y. Andr\'e \cite{An}: a \emph{pure} Weil cohomology (without Lefschetz) and a \emph{mixed} Weil cohomology, \ie
$H^*:\Cor_k^{op} \to \cA$
where $\cA$ is an abelian $\otimes$-category such that $H^*$ is homotopy invariant, verifies K\"unneth and Mayer-Vietoris.
Call \emph{Andr\'e-Voevodsky topos} its motivic topos.  Finally, Bloch-Ogus axiomatizations \cite{BO} and \cite{LBVBO} would yield the \emph{Bloch-Ogus topos}.

\section{Theoretical motives and motivic complexes}\label{motives}

Consider our regular (co)homology theory $\T$ on a base category $\cC$ along with a distinguished subcategory $\cM$.

\subsection{Constructible $\T$-motives} For abelian group sorts $h_n(X,Y)$ in the previously mentioned signature $\Sigma$ we have the following key fact.
\begin{lemma}\label{synadd} The syntactic category $\cC_{\T}^{\rm reg}$ is an additive category.\end{lemma}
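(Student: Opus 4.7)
The plan is to exhibit every object of $\cC_\T^{\rm reg}$ as an internal abelian group in such a way that every morphism preserves this structure, and then appeal to the standard characterization of additive categories as cartesian categories in which each object carries a functorial abelian group object structure. Axioms $\sqcup$1--2 supply exactly the right ingredients: each sort $h_n(X,Y)$ is an abelian group object via the function symbols $+, -, 0$, and every function symbol $\square_n, \partial_n$ of $\Sigma$ is a group homomorphism.

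The technical core is a simultaneous induction on terms and regular formulas over $\Sigma$. First one shows that every term $t(\vec x)$ is $\T$-provably additive, i.e.\ $t(\vec 0)=0$ and $t(\vec x_1+\vec x_2)=t(\vec x_1)+t(\vec x_2)$; variables and the constants $0$ are trivially additive, and additivity is preserved by $+, -$ and by the function symbols $\square_n, \partial_n$ thanks to $\sqcup$2. Then one shows that every regular formula $\phi(\vec x)$ provably cuts out a subgroup of the product of the sorts of $\vec x$: $\top$ gives the whole group; an atomic equation $s=t$ is the ``kernel'' of the homomorphism defined by the term $s-t$ by the previous step; conjunction corresponds to intersection of subgroups; and existential quantification to the image of a subgroup under a projection. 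In particular each object $\{\vec x.\phi\}$ of $\cC_\T^{\rm reg}$ inherits a canonical abelian group object structure.

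Next I would verify that every morphism $\theta\colon\{\vec x.\phi\}\to\{\vec y.\psi\}$ is a homomorphism with respect to these structures: its graph is a subgroup of the product by the previous step, the first projection is an iso by functionality and is a homomorphism as a restriction of a product projection, so composing its inverse with the second projection shows that $\theta$ preserves $0$ and $+$. With finite products provided by the regular syntactic structure, defining addition of morphisms pointwise on the codomain yields abelian group hom-sets, and composition is automatically bilinear because every morphism is a homomorphism; finite products then serve as biproducts, giving the additive structure on $\cC_\T^{\rm reg}$. The one delicate point is discharging the subgroup induction at the level of $\T$-provability rather than mere semantic validity, which requires unwinding the regular derivations induced by $\sqcup$1--2 for conjunction and for the existential step, but presents no genuine obstacle.
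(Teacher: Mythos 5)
Your proposal is correct, and it is essentially the same argument that the paper invokes: the paper's proof is a one-line citation to \cite[Lemma 2.4]{BVCL}, which proceeds exactly by the induction you describe (terms are provably linear by $\sqcup$1--2, regular formulas provably define subobjects closed under the group operations, hence every object is canonically an abelian group object and every arrow a homomorphism, so finite products are biproducts and hom-sets are abelian groups with bilinear composition). The ``simpler'' in the paper's remark refers to specializing the $R$-module structure of \cite{BVCL} to bare abelian groups, which is precisely what you have done.
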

\begin{proof} Similar (and in fact simpler) to \cite[Lemma~2.4]{BVCL}.
\end{proof}
\begin{defn} Denote $\cA[\T]$ the Barr exact completion of the syntactic category $\cC^{\rm reg}_\T$. Call $\cA[\T]$ the category of \emph{constructible effective $\T$-motives}.\end{defn}
\begin{propose}\label{abeliancon}
The category $\cA[\T]$ of constructbile $\T$-motives is an abelian category and
$$ \Tmod (\cE)\cong \text{\sf Ex} (\cA[\T], \cE)$$
for any $\cE$ Barr exact category, \eg an abelian category.
\end{propose}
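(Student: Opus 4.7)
The plan is to assemble the statement from two well-known universal properties: that of the regular syntactic category and that of the Barr-exact completion. First I would establish that $\cA[\T]$ is abelian, then derive the equivalence by composing the representing property of $\cC_{\T}^{\rm reg}$ with the representing property of the exact completion.

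For the first claim, I would argue as follows. By construction $\cA[\T]$ is a Barr exact category, so the task is to promote it to an abelian category. By Lemma~\ref{synadd}, the regular syntactic category $\cC_{\T}^{\rm reg}$ is additive; I would then invoke the standard fact that the Barr-exact completion functor preserves additive structure, \ie $\cA[\T]$ inherits a zero object and biproducts from $\cC_{\T}^{\rm reg}$ via the fully faithful embedding $\cC_{\T}^{\rm reg}\into \cA[\T]$, and moreover the equivalence relations adjoined in the exact completion are compatible with the additive structure (every equivalence relation on an abelian group object is the kernel pair of some subgroup). Consequently $\cA[\T]$ is an additive Barr-exact category, and such categories are precisely the abelian categories (monomorphisms are kernels of their cokernels, regular epimorphisms are cokernels of their kernels).

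For the second claim I would string together two equivalences. On one side, the representing property of the regular syntactic category, recalled in Section~\ref{preth} (and made explicit in the proof of Theorem~\ref{cltop} via \cite[Th.~6.5]{Bu}), gives a natural equivalence
\[
\Tmod(\cE)\;\longby{\simeq}\;\text{\sf Lex}_{J^{\rm reg}}(\cC_{\T}^{\rm reg},\cE)
\]
for any regular category $\cE$, where the right-hand side denotes regular functors. On the other side, the universal property of the Barr-exact completion (applicable since $\cE$ is Barr exact) yields
\[
\text{\sf Lex}_{J^{\rm reg}}(\cC_{\T}^{\rm reg},\cE)\;\longby{\simeq}\;\text{\sf Ex}(\cA[\T],\cE),
\]
by extending each regular functor uniquely along the embedding $\cC_{\T}^{\rm reg}\into \cA[\T]$. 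Composing the two equivalences produces the desired natural equivalence $\Tmod(\cE)\cong \text{\sf Ex}(\cA[\T],\cE)$.

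The step most in need of care is the reconciliation of the two notions of ``exact functor'' in play: regular functors between Barr-exact categories (left exact and preserving regular epimorphisms) versus exact functors between abelian categories in the usual sense. When the target $\cE$ is abelian I would note that a regular functor between abelian categories automatically preserves cokernels (cokernels are coequalizers of kernel pairs, hence obtained from regular epis) and is therefore exact in the classical sense; conversely every classically exact functor is visibly regular. This matching, together with the transport of the additive structure through the exact completion, closes the loop and delivers the stated equivalence.
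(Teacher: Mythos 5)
Your argument is correct and follows essentially the same route as the paper: additivity of $\cC_{\T}^{\rm reg}$ (Lemma~\ref{synadd}) plus the fact that exact completion preserves additivity and that additive Barr-exact categories are abelian (Tierney), combined with the representing property of the regular syntactic site (Theorem~\ref{cltop}) and the universal property of the Barr-exact completion (Lack's theorem) to produce the natural equivalence. The only addition you make is the explicit check that ``regular functor'' and ``exact functor'' coincide for abelian targets, which the paper leaves implicit but is a reasonable point to spell out.
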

\begin{proof}As $\cC_{\T}^{\rm reg}$ is an additive category by Lemma \ref{synadd} its Barr exact completion is also additive and thus abelian (by a well known theorem of M. Tierney abelian is equivalent to additive and Barr exact). Moreover, from Theorem~\ref{cltop}, any $\T$-model in $\cE$ exact is a left exact $J^{\rm reg}_\T$-continuous functor $\cC_\T^{\rm reg}$ to $\cE$ which is just a regular functor. Thus from the universal property of the exact completion (see \cite[Thm. 3.3]{La}) we have that it yields an exact functor $\cA[\T]\to \cE$. Conversely, note that $\cC_{\T}^{\rm reg}\into \cA[\T]$ is fully faithful and regular (see \cite[3.2]{La}) so that we can just use the same argument backwards.
\end{proof}
Note that $\cA[\T]$ is ``homological''. Similarly, we obtain $\cA[\T^{op}]$ the category of constructible effective $\T^{op}$-motives which is ``cohomological''. We have the following duality result.
\begin{propose} There is a canonical equivalence $$ \cA[\T^{op}] \cong \cA[\T]^{op}$$
\end{propose}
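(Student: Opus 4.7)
The plan is to deduce this duality from the universal property in Proposition~\ref{abeliancon} via a $2$-Yoneda argument, using the fact that the opposite of an abelian category is again abelian and that the signature reversal $\Sigma \leadsto \Sigma^{op}$ corresponds to passage to the opposite category on the semantic side.

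First, I would establish the key model-theoretic duality: for any abelian category $\cA$, there is a natural equivalence $\Top (\cA) \cong \Tmod (\cA^{op})$. Unpacking the signature $\Sigma^{op}$ from Definition~\ref{cohomtop} versus the signature $\Sigma$ from \ref{signature}--\ref{axiom}, a $\T^{op}$-model in $\cA$ consists of objects $h^n(X,Y) \in \cA$ with function symbols $\square^n\colon h^n(X',Y')\to h^n(X,Y)$ and $\partial^n\colon h^n(Y,Z)\to h^{n+1}(X,Y)$ satisfying the reversed exactness axioms $\sqcap$1--4. Reading each of these arrows in $\cA^{op}$ and relabelling $h_n := h^{-n}$ reproduces exactly the data of a $\T$-model in $\cA^{op}$, with the cohomological degree shift $n\mapsto n+1$ becoming the homological shift $n\mapsto n-1$. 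This identification is manifestly $2$-natural with respect to exact functors of abelian categories.

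Next, I apply Proposition~\ref{abeliancon} to both $\T$ and $\T^{op}$ to obtain, for any abelian $\cA$,
$$\Top (\cA) \;\cong\; \text{\sf Ex}(\cA[\T^{op}], \cA), \qquad \Tmod (\cA^{op}) \;\cong\; \text{\sf Ex}(\cA[\T], \cA^{op}).$$
Since a short exact sequence in $\cA^{op}$ is a short exact sequence in $\cA$ with arrows reversed, exactness is self-dual, and so an exact functor $\cA[\T]\to \cA^{op}$ is the same datum as an exact functor $\cA[\T]^{op}\to \cA$; this gives $\text{\sf Ex}(\cA[\T], \cA^{op}) \cong \text{\sf Ex}(\cA[\T]^{op}, \cA)$ naturally in $\cA$. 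Chaining the three equivalences produces a $2$-natural equivalence $\text{\sf Ex}(\cA[\T^{op}], \cA) \cong \text{\sf Ex}(\cA[\T]^{op}, \cA)$ of representable $2$-functors on the $2$-category of abelian categories and exact functors, and the $2$-Yoneda lemma then delivers the canonical equivalence $\cA[\T^{op}] \cong \cA[\T]^{op}$.

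The step requiring the most care is the first one: one must verify that the bijection between interpretations of $\Sigma^{op}$ in $\cA$ and of $\Sigma$ in $\cA^{op}$ really validates the axioms on both sides term by term (including the long exact sequence axiom, where one must check that reversing arrows together with the shift $h_n = h^{-n}$ turns $\sqcup$4 into $\sqcap$4 without sign or indexing discrepancies), and that this correspondence is functorial under exact functors of abelian categories so that the $2$-Yoneda argument is legitimate. Once this bookkeeping is in place, the remaining steps are essentially formal applications of Proposition~\ref{abeliancon} and the self-duality of exactness.
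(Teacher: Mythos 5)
Your proposal takes essentially the same route as the paper: establish $\Top(\cA)\cong\Tmod(\cA^{op})$, feed it through Proposition~\ref{abeliancon} on both sides, note that exactness is self-dual, and finish by ($2$-)Yoneda naturality in $\cA$. One small bookkeeping correction: the relabelling should be $h_n := h^n$, not $h_n := h^{-n}$, since passing to $\cA^{op}$ already turns $\partial^n\colon h^n(Y,Z)\to h^{n+1}(X,Y)$ into a degree-lowering map $h^{n+1}(X,Y)\to h^n(Y,Z)$ and the contravariant $\square^n$ into a covariant one; with $h_n := h^{-n}$ the connecting map would still raise the homological index, and $\sqcap$4 would not match $\sqcup$4.
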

\begin{proof} If $\cA$ is abelian also $\cA^{op}$ is abelian. Any object of $\cA$ is endowed with a unique abelian group and co-group structure, \ie a group in $\cA^{op}$. Therefore we get
$$\Top (\cA)= \Tmod (\cA^{op})$$
By Proposition \ref{abeliancon} we then obtain
$$\text{\sf Ex} (\cA[\T^{op}], \cA)\cong \text{\sf Ex} (\cA[\T]^{op}, \cA)$$
Since this equivalence holds for any abelian category $\cA$ and it is natural with respect to the variable $\cA$ we get the claimed equivalence. 
\end{proof}
There is a universal model corresponding to the Yoneda embedding via Theorem~\ref{cltop} and the category $\cA[\T]\subset \cE[\T]$ is the full subcategory given by all coequalizers of equivalence relations in $\cC_\T^{\rm reg}$ (see \cite[\S 3]{La}). Therefore, using the Yoneda embedding 
$$\cC_\T^{\rm reg}\into \cA[\T]\subset \cE[\T]$$ 
and Proposition \ref{abeliancon} we set:

\begin{defn} \label{univhom}
Denote $H^\T\in \Tmod (\cA[\T])$ the \emph{universal homology} corresponding to the universal model of $\T$, \ie the identity of $\cA [\T]$. For $\cE$ Barr exact and $H\in \Tmod (\cE)$ we denote
$$r_H : \cA[\T]\to \cE$$
and call it the \emph{realization} functor associated to $H$.
\end{defn}
Note that there is always the regular theory $\T_H$ of a model $H\in \Tmod (\cE)$ obtained adding all regular axioms which are valid in the model: from Lemma~\ref{fadd} we have that $\cE[\T_H]$ is a subtopos of $\cE[\T]$.

\subsection{Nori's construction via construcible $\T$-motives}
Applying the theory of the model to the singular homology $H^{\rm sing}$ (resp. cohomology $H_{\rm sing}$ ) as in \S \ref{singhom} we obtain Nori's effective homological (resp. cohomological) motives as constructible $\T_{H^{\rm sing}}$-motives (resp. $\T^{op}_{H_{\rm sing}}$-motives). See \cite{ABV} and \cite{HMN} for an account on Nori's original construction: we here reformulate it using categorical logic according to \cite{BVCL}.

For a given graph $D$ we have a signature $\Sigma_D$ which attach sorts to objects, function symbols to arrows and for each object of $D$ we also attach sorts and function symbols formalizing an $R$-module structure (with $R$ any ring) as indicated in \cite[\S 2.2]{BVCL}.   For a representation $T:D\to \Rmod$ we can define a regular theory $\T_T$ of $T$ by the set of regular sequents which are valid in $T$. This theory $\T_T$ yields a syntactic category $\cC_{\T_T}^{\rm reg}$.  We also clearly get $\tilde T:D\to \cC_{\T_T}^{\rm reg}$ and since $T$ is a conservative model of $\T_{T}$ we get $F_T:\cC_{\T_T}^{\rm reg}\to \Rmod$ which is exact and faithful. 
\begin{thm}[\cite{BVCL}]\label{BVCL}
The (Barr) exact completion $\cC(T)$ of $\cC_{\T_T}^{\rm reg}$  is an $R$-linear abelian category along with a forgetful (faithful, exact) functor $F_T: \cC(T) \to  \Rmod$ and a representation $\tilde T :D \to  \cC(T)$ such that $F_T\circ\tilde T = T$ universally, \ie the triple $(\tilde T, \cC(T), F_T)$ is initial among such factorizations of the representation $T$.
\end{thm}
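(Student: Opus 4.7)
The plan is to imitate the strategy of Proposition~\ref{abeliancon}, viewing the representation $T$ itself as a conservative model of the regular theory $\T_T$ and then transporting the universal property of the regular syntactic category through the Barr exact completion.

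First I would verify that $\cC_{\T_T}^{\rm reg}$ is $R$-linear additive by the same argument as Lemma~\ref{synadd}: the signature $\Sigma_D$ contains axioms formalizing an $R$-module structure on every sort, which endow all hom-sets with a canonical $R$-module structure compatible with composition. The Barr exact completion preserves additivity, and since abelian categories coincide with additive Barr exact categories (Tierney's theorem, as invoked in Proposition~\ref{abeliancon}), $\cC(T)$ is automatically $R$-linear abelian.

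Next I would construct $\tilde T$ and $F_T$. Set $\tilde T: D \to \cC(T)$ to be the composite of the generic interpretation $D \to \cC_{\T_T}^{\rm reg}$ with the fully faithful regular inclusion $\cC_{\T_T}^{\rm reg} \hookrightarrow \cC(T)$ of~\cite[3.2]{La}. By construction $T$ is a $\T_T$-model in $\Rmod$, and in fact a conservative one, because $\T_T$ collects \emph{all} regular sequents valid in $T$; hence the representability of $\T_T$-models by the syntactic category yields a regular functor $\cC_{\T_T}^{\rm reg} \to \Rmod$, which the universal property of the exact completion~\cite[Thm.~3.3]{La} extends uniquely to an exact functor $F_T: \cC(T) \to \Rmod$ with $F_T \circ \tilde T = T$. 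Faithfulness of $F_T$ is then inherited from the conservativity of $T$ as a $\T_T$-model.

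The subtle step is the universal property. Given another factorization $T = G \circ S$ with $S: D \to \cA$, $\cA$ an $R$-linear abelian category, and $G: \cA \to \Rmod$ faithful exact, the plan is to show that $S$ itself is a $\T_T$-model. The key lemma is that a faithful exact functor between abelian categories reflects the subobject inclusions and regular-image factorisations used to interpret regular formulas; consequently every regular sequent valid in $G \circ S = T$ is already valid in $S$, so the axioms of $\T_T$ hold in $S$. Applying Proposition~\ref{abeliancon} with $\T_T$ in place of $\T$ (the proof goes through verbatim) then produces a unique exact functor $\cC(T) \to \cA$ whose restriction along $\tilde T$ is $S$; the relation $G \circ (\cC(T) \to \cA) = F_T$ is automatic from the uniqueness part of the representability statement applied to the $\T_T$-model $T$. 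The main obstacle is precisely this reflection lemma for regular sequents under faithful exact functors; once it is in hand, the remainder is bookkeeping with the universal properties already in play.
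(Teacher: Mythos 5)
Your proposal matches the paper's own sketch of the proof given in the remark that follows the theorem: the crucial step in both is the reflection of validity of regular sequents under faithful exact functors, and combined with the chain $\Ttmod(\cA)\cong \text{Lex}_{J_{\T_T}^{\rm reg}}(\cC_{\T_T}^{\rm reg}, \cA)\cong \text{Ex}(\cC(T), \cA)$ this yields the universal property. Your preliminary observations on additivity and the construction of $\tilde T$ and $F_T$ likewise track Lemma~\ref{synadd} and Proposition~\ref{abeliancon}.
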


For the category of schemes $\Sch_k$ as in \S \ref{singhom} we can take Nori's graph $D^{Nori}$ and Nori's representation $T$ of singular homology for $R=\Z$ (see \cite{ABV} and \cite{HMN}). In this case $$\cC(T)\df {\sf EHM}$$ is Nori's category of effective homological motives. Moreover, the corresponding signature $\Sigma_{D^{Nori}}$ is exactly our signature $\Sigma$ in \S \ref{signature} for $\cC=\Sch_k$ and $\cM$ closed subschemes.
 Dually, for singular cohomology we get Nori's category ${\sf ECM}$ of effective cohomological motives. Therefore, the theory $\T_T$ is exactly the theory of the model $\T_{H^{\rm sing}}$ (resp. $\T_{H_{\rm sing}}^{op}$ for cohomology). Thus:
\begin{cor} \label{Nori}
For the singular homology $H^{\rm sing}$ and cohomology $H_{\rm sing}$ on the category of schemes $\Sch_k$ where $k\into\C$ we have 
$$\cA[\T_{H^{\rm sing}}]\cong {\sf EHM}\ \ \ \text{and}\ \ \ \cA[\T_{H_{\rm sing}}^{op}]\cong {\sf ECM}$$
\end{cor}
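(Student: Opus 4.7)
The plan is to reduce the corollary to Theorem~\ref{BVCL} by identifying the syntactic categories underlying the two constructions. The definition $\cA[\T] \df$ (Barr exact completion of $\cC_\T^{\rm reg}$) together with Theorem~\ref{BVCL} shows that both ${\sf EHM}$ and $\cA[\T_{H^{\rm sing}}]$ are presented as Barr exact completions of regular syntactic categories; what must be checked is that the underlying syntactic data coincide.

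First I would match the signatures. As explicitly noted in the paragraph preceding the statement, the Nori graph $D^{\rm Nori}$ (whose vertices are pairs $(X,Y)$ with $Y \hookrightarrow X$ a closed immersion of $k$-schemes, together with boundary edges and morphisms of pairs) gives rise under the recipe of \cite[\S 2.2]{BVCL} to the signature $\Sigma_{D^{\rm Nori}}$ that attaches an $R$-module sort to each object and a function symbol to each arrow. For $R=\Z$, $\cC=\Sch_k$, and $\cM$ the subcategory of closed immersions, this is verbatim the signature $\Sigma$ introduced in \S\ref{signature} (sorts $h_n(X,Y)$, the abelian group structure symbols, the $\square_n$ and the $\partial_n$).

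Next I would match the theories. By definition, $\T_T$ is the set of regular sequents valid in Nori's representation $T$, while $\T_{H^{\rm sing}}$ is the set of regular sequents valid in the $\T$-model $H^{\rm sing}$ of \S\ref{singhom}. But Nori's representation sends a pair $(X,Y)$ of $k$-schemes to the relative singular homology $H_n^{\rm sing}(X(\mathbb{C}),Y(\mathbb{C}))$, with morphisms of pairs and boundary maps acting in the standard way; this is by construction the very same family of abelian groups and maps computed by $H^{\rm sing}$. Consequently $T$ and $H^{\rm sing}$ are literally the same functor (or rather, interpretation of the common signature) into $\operatorname{Ab}$, so a regular sequent is valid in one iff it is valid in the other. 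Hence $\T_T = \T_{H^{\rm sing}}$ and therefore $\cC_{\T_T}^{\rm reg} = \cC_{\T_{H^{\rm sing}}}^{\rm reg}$. Taking Barr exact completions and invoking Theorem~\ref{BVCL} yields
\[
\cA[\T_{H^{\rm sing}}] \;=\; (\cC_{\T_{H^{\rm sing}}}^{\rm reg})_{\rm ex} \;=\; (\cC_{\T_T}^{\rm reg})_{\rm ex} \;=\; \cC(T) \;=\; {\sf EHM}.
\]
The cohomological case is formally dual: the same argument over the signature $\Sigma^{op}$ with $H_{\rm sing}$ in place of $H^{\rm sing}$ identifies $\cA[\T^{op}_{H_{\rm sing}}]$ with Nori's category ${\sf ECM}$ of effective cohomological motives.

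The only genuinely delicate point is the verification that the syntactic data on the Nori side agree on the nose with the data on the $\T$-model side. The abelian group axioms and functoriality axioms match trivially; what deserves care is the boundary morphism $\partial_n$ and the arrows in $D^{\rm Nori}$ encoding triples $Z \subset Y \subset X$, because the corresponding arrows in Nori's diagram must be shown to be exactly the function symbols in $\Sigma$ that $H^{\rm sing}$ interprets as the connecting homomorphisms. Once this book-keeping is made explicit, the remainder is a direct application of the universal property in Theorem~\ref{BVCL}; no further constructions are required.
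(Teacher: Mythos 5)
Your proof is correct and follows essentially the same route as the paper: match the signature $\Sigma_{D^{\rm Nori}}$ with $\Sigma$ for $(\Sch_k,\text{closed immersions})$, observe that Nori's representation $T$ and the $\T$-model $H^{\rm sing}$ are the same interpretation so $\T_T=\T_{H^{\rm sing}}$, and conclude by comparing the Barr exact completion in the definition of $\cA[\T_{H^{\rm sing}}]$ with $\cC(T)={\sf EHM}$ from Theorem~\ref{BVCL}. Your explicit flagging of the $\partial_n$ book-keeping is a useful expansion of what the paper asserts more tersely, but it is the same argument.
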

We may call $\T_{H^{\rm sing}}$ and $\T_{H_{\rm sing}}^{op}$ the regular singular (co)homology theories. The universal representation $\tilde T: D^{Nori}\to {\sf EHM}$ corresponds to the universal model $H^{\T_{H^{\rm sing}}}\in \Tmod (\cA[\T_{H^{\rm sing}}])$ as in Definition \ref{univhom}. Denote 
\begin{equation}\label{Norihom}
H^{\rm Nori}\in \Tmod ({\sf EHM})
\end{equation}
the model corresponding to the universal model $H^{\T_{H^{\rm sing}}}$ under the equivalence of Lemma \ref{Nori}. 

\begin{remark} A word on the proof of the Theorem \ref{BVCL}. If we are given $D\by{S}\cA\by{F}\Rmod$ and $F\circ S = T$ with $\cA$ abelian and $F$ forgetful then $$S\in \Ttmod(\cA)\cong \text{Lex}_{J_{\T_T}^{\rm reg}}(\cC_{\T_T}^{\rm reg}, \cA) \cong \text{Ex}(\cC (T), \cA)$$
To see that $S$ is a $\T_T$-model in $\cA$ we have used that $F$ is exact and faithful so that it reflects the validity of regular sequents. Note that for a representation $T:D\to \Rmod^{fg}$ with $R$ Noetherian then $F_T: \cC (T) \to  \Rmod^{fg}$ as well.
\end{remark}

\subsection{$\T$-motives}
Recall (see \cite[\S 8.6]{KS}) that for an (essentially small) abelian category $\cA$ we  have that $\Ind (\cA)$ is Grothendieck and the Yoneda embedding yields
$$\cA\into \Ind(\cA) \cong {\sf Lex} (\cA^{op}, \Ab)\subset \A (\cA^{op}, \Ab)\subset \hat{\cA}$$
Note that epi = regular epi = descent = effective descent morphism in an abelian category $\cA$ (actually: regular epis are effective descent in any exact category, see \cite[B1.5.6]{El}). Denote $\Shv (\cA)$
the topos of sheaves for the descent topology. We thus have the following (\cf \cite[Ex. 8.18]{KS}) 
$$\xymatrix{{\sf Lex} (\cA^{op}, \Ab)\ar[r]^-{}  \ar[d]_-{} & \A (\cA^{op}, \Ab) \ar[d]^-{} \\
\Shv (\cA) \ar@{>}[r]^-{f} & \hat{\cA}}$$
2-pull-back diagram of categories where $f$ is the canonical embedding so that: 
\begin{lemma} \label{ind}
$\Ind(\cA[\T]) \cong \Ab (\cE [\T])\cap \A (\cA[\T]^{op}, \Ab) $
\end{lemma}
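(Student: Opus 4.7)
The plan is to derive the identification from three ingredients: (i) the standard description of $\Ind$ of an essentially small abelian category as a category of left exact additive functors, (ii) the 2-pullback square displayed immediately above the lemma, and (iii) an equivalence between the classifying topos $\cE[\T]$ and the topos of sheaves for the regular topology on the Barr exact completion $\cA[\T]$.

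First I would invoke the standard fact (\cite[Thm.~8.6.5]{KS}) that for any essentially small abelian category $\cA$ the Ind-completion is canonically equivalent to ${\sf Lex}(\cA^{op},\Ab)$, applied to $\cA = \cA[\T]$. Next I would read off the 2-pullback diagram exhibited in the excerpt: inside $\hat{\cA[\T]}$ an additive presheaf is left exact if and only if its underlying set-valued presheaf is a sheaf for the descent (i.e.\ regular epi) topology $J^{\reg}$, so that
$$ {\sf Lex}(\cA[\T]^{op}, \Ab) \cong \Shv(\cA[\T]) \cap \A(\cA[\T]^{op}, \Ab). $$
What then remains is to match $\Shv(\cA[\T], J^{\reg})$ with $\cE[\T] = \Shv(\cC_{\T}^{\reg}, J_{\T}^{\reg})$, after which passing to internal abelian group objects (which preserves equivalences of topoi) together with the observation that an additive presheaf is a sheaf of sets iff it is a sheaf of abelian groups will deliver the claim.

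The main obstacle is this last identification of topoi, which I would handle via the comparison lemma applied to the fully faithful inclusion $\cC_{\T}^{\reg} \hookrightarrow \cA[\T]$. Two inputs are required: density and subcanonicity. Density holds because by construction of the Barr exact completion every object of $\cA[\T]$ is a coequalizer of an equivalence relation in $\cC_{\T}^{\reg}$, hence admits a regular epi from an object of $\cC_{\T}^{\reg}$; in particular the $J^{\reg}$-covering sieves on any object of $\cA[\T]$ are generated by morphisms coming from $\cC_{\T}^{\reg}$. Subcanonicity is automatic since the regular topology on any regular (hence any exact) category is subcanonical. The comparison lemma then yields $\Shv(\cC_{\T}^{\reg}, J^{\reg}) \simeq \Shv(\cA[\T], J^{\reg})$, so that $\Ab(\cE[\T]) \cong \Ab(\Shv(\cA[\T]))$. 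Combining this with the previous two steps gives
$$ \Ind(\cA[\T]) \cong \Shv(\cA[\T]) \cap \A(\cA[\T]^{op},\Ab) \cong \Ab(\cE[\T]) \cap \A(\cA[\T]^{op},\Ab), $$
which is the desired equivalence. The delicate point to write out carefully is the transfer of covers from $\cC_{\T}^{\reg}$ to $\cA[\T]$ — one should verify that every $J^{\reg}$-cover of an object $X \in \cA[\T]$ is refined by a cover coming from a presentation of $X$ as a coequalizer in $\cC_{\T}^{\reg}$, which is precisely where the explicit description of the exact completion is used.
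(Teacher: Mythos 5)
Your proposal is correct and follows essentially the same route as the paper: identify $\Ind(\cA[\T])$ with ${\sf Lex}(\cA[\T]^{op},\Ab)$, read the displayed 2-pullback square, and then identify $\Shv(\cA[\T])$ with $\cE[\T]$, after which the passage to additive sheaves of abelian groups is formal. The one difference is in the last identification: the paper simply cites \cite[D3.3.10]{El} for $\cE[\T]\cong\Shv(\cA[\T])$ (which records exactly that the classifying topos of a regular theory is the topos of sheaves on the effectivization of its syntactic category for the regular topology), whereas you re-derive this fact by applying the comparison lemma to the fully faithful, cover-dense, subcanonical inclusion $\cC_\T^{\reg}\hookrightarrow\cA[\T]$. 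Your argument is sound and buys a self-contained proof instead of a citation, at the modest cost of one detail worth making explicit: the comparison lemma requires not only that $\cC_\T^{\reg}$ be $J^{\reg}$-dense in $\cA[\T]$ (which you verify via the coequalizer presentations of the exact completion) but also that the topology induced on $\cC_\T^{\reg}$ by restriction agrees with its own regular topology, i.e.\ that the embedding $\cC_\T^{\reg}\hookrightarrow\cA[\T]$ both preserves and reflects regular epimorphisms; this holds for the Barr exact completion of a regular category (see \cite[\S 3]{La}), but it deserves a sentence in the writeup.
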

\begin{proof} It follows from $\cE [\T]\cong \Shv (\cA[\T])$ (see \cite[D3.3.10]{El}).\end{proof}
\begin{defn}
Call $\Ind(\cA[\T])$ the category of \emph{effective $\T$-motives}. 
\end{defn}
\begin{propose}\label{Grothabelian}
The category $\Ind(\cA[\T])$ is a Grothendieck abelian category and
$$ \Tmod (\cA)\cong \text{\sf Ex} (\Ind(\cA[\T]), \cA)$$
for any $\cA$ Grothendieck abelian category.
\end{propose}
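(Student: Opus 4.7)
The plan is to deduce the proposition from Proposition \ref{abeliancon} together with the universal property of Ind-completion for essentially small abelian categories; no new constructions are needed, only the assembly of standard ingredients.

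First, I would verify that $\Ind(\cA[\T])$ is Grothendieck abelian. The category $\cA[\T]$ is essentially small (being the Barr exact completion of the essentially small regular syntactic category $\cC_{\T}^\reg$) and abelian by Proposition \ref{abeliancon}. Under these hypotheses, classical Ind-completion theory (\cf \cite[\S 8.6]{KS}) gives that $\Ind(\cA[\T])$ is a Grothendieck abelian category in which $\cA[\T]$ sits as a generating family of finitely presented (coherent) objects via a fully faithful exact Yoneda embedding $\cA[\T]\into\Ind(\cA[\T])$. This also reconciles with Lemma \ref{ind}.

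Next I would establish the equivalence. Given an exact functor $F : \Ind(\cA[\T]) \to \cA$ (understood to preserve filtered colimits, as is natural for functors out of an Ind-category to a Grothendieck abelian target), its restriction $F|_{\cA[\T]}$ along the Yoneda embedding is exact, and by Proposition \ref{abeliancon} corresponds to a $\T$-model of $\cA$. Conversely, a $\T$-model $H\in\Tmod(\cA)$ yields, via Proposition \ref{abeliancon}, an exact functor $r_H : \cA[\T] \to \cA$; since $\cA$ is cocomplete, the universal property of Ind extends this to a unique filtered-colimit-preserving functor $\tilde r_H : \Ind(\cA[\T]) \to \cA$ with $\tilde r_H|_{\cA[\T]}\cong r_H$.

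The point requiring care is the exactness of the extension $\tilde r_H$. Here I would invoke that every short exact sequence in $\Ind(\cA[\T])$ can be written as a filtered colimit of short exact sequences coming from $\cA[\T]$ — a standard structural fact for Ind-completions of abelian categories (see again \cite[\S 8.6]{KS}) — combined with the exactness of filtered colimits in the Grothendieck abelian target $\cA$: applying $\tilde r_H$ levelwise then produces a filtered colimit of short exact sequences in $\cA$, which remains short exact. The two functorial assignments $F\leadsto F|_{\cA[\T]}$ and $H\leadsto \tilde r_H$ are mutually inverse up to natural isomorphism by the universal property of Ind, and natural in $\cA$, yielding the claimed equivalence.
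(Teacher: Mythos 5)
Your proof is correct and follows essentially the same route as the paper: same appeal to \cite[\S 8.6]{KS} for $\Ind(\cA[\T])$ being Grothendieck, same two-step strategy of restricting along $\cA[\T]\into\Ind(\cA[\T])$ in one direction and Ind-extending $r_H$ in the other, both resting on Proposition~\ref{abeliancon}. The only difference is that you re-derive the exactness of the Ind-extension by hand (filtered colimits of short exact sequences plus AB5 in $\cA$), whereas the paper simply cites \cite[Cor.~8.6.8]{KS} for the exactness of $\Ind(r_H)$ and then applies the exact colimit functor $\Ind(\cA)\to\cA$.
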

\begin{proof} The category $\Ind(\cA[\T])$ is Grothendieck since $\cA[\T]$ is essentially small (see \cite[Thm. 8.6.5 (i) \& (vi)]{KS}). From Proposition \ref{abeliancon} if $H\in\Tmod (\cA)$ yields $r_H:\cA[\T]\to \cA$ exact and $\Ind (r_H): \Ind (\cA[\T])\to \Ind (\cA)=\cA$ is also exact (see \cite[Cor. 8.6.8]{KS}). Conversely, note that $\cA[\T]\into \Ind(\cA[\T])$ is an exact embedding (see \cite[Thm. 8.6.5 (ii)]{KS}) and we are then granted by Proposition \ref{abeliancon}.
\end{proof}
\begin{propose} \label{indquot}
For $H\in \Tmod (\cE)$ then the realization   $$\Ind (\cA[\T])\onto\Ind(\cA[\T_H])$$ is a Serre quotient with a section.
\end{propose}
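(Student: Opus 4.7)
The plan is to reduce the claim to Lemma \ref{fadd} at the topos level and then descend to the ind-constructible subcategories using the universal property of Proposition \ref{Grothabelian}. First, the realization $\Ind(r_H)\colon \Ind(\cA[\T]) \to \Ind(\cA[\T_H])$ is exact and preserves filtered colimits, by \cite[Cor.~8.6.8]{KS}. In fact it coincides with the restriction of $f^*\colon \Ab(\cE[\T]) \onto \Ab(\cE[\T_H])$ to the ind-constructible subcategories, since $f^*$ preserves all colimits and carries the generators $\cA[\T]$ into $\cA[\T_H]$ (being the realization of the universal $\T_H$-model regarded as a $\T$-model, via the comparison given by Lemma \ref{ind}).

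Let $\cS \df \ker\Ind(r_H) \subset \Ind(\cA[\T])$. Exactness of $\Ind(r_H)$ makes $\cS$ a Serre subcategory and preservation of filtered colimits makes $\cS$ closed under them, so $\cS$ is a \emph{localizing} subcategory of the Grothendieck abelian category $\Ind(\cA[\T])$. By the classical Gabriel localization theory the quotient $q\colon \Ind(\cA[\T]) \to \Ind(\cA[\T])/\cS$ is again Grothendieck abelian and admits a fully faithful right adjoint $s$, which will provide the desired section.

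It remains to identify $\Ind(\cA[\T])/\cS$ with $\Ind(\cA[\T_H])$ through a universal property argument. The functor $\Ind(r_H)$ vanishes on $\cS$ and hence induces a canonical exact comparison $\bar r\colon \Ind(\cA[\T])/\cS \to \Ind(\cA[\T_H])$. For any Grothendieck abelian $\cA$, the universal property of the Serre quotient identifies exact functors out of $\Ind(\cA[\T])/\cS$ with exact functors $G\colon \Ind(\cA[\T]) \to \cA$ vanishing on $\cS$; Proposition \ref{Grothabelian} identifies these with $\T$-models in $\cA$ satisfying the further axioms of $\T_H$, \ie $\T_H$-models in $\cA$; and Proposition \ref{Grothabelian} applied to $\T_H$ presents those as exact functors out of $\Ind(\cA[\T_H])$. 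Hence $\bar r$ is an equivalence, and composing its quasi-inverse with $s$ yields a section of $\Ind(r_H)$; this section agrees with the restriction of $f_*$ from Lemma \ref{fadd}.

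The main obstacle is the middle link in the chain above: the statement that an exact $G\colon \Ind(\cA[\T]) \to \cA$ vanishes on $\cS$ if and only if the corresponding $\T$-model is a $\T_H$-model. To establish it, $\cS$ must be described via generators: each additional axiom $\phi \vdash_{\vec x} \psi$ of $\T_H$ determines a quotient $\phi/(\phi \wedge \psi) \in \cA[\T]$ which lies in $\cS$ (its image vanishes in $\cA[\T_H]$) and whose image under $G$ vanishes precisely when the axiom holds in the associated model; one must verify that such quotients generate $\cS$ as a localizing subcategory. The cleanest path is to transport the analogous description of $\ker f^*$ that emerges from the proof of Lemma \ref{fadd}, where the topology $J_{\T_H}^{\T}$ is generated from $J_\T^{\rm reg}$ by the sieves forced by the new axioms, back along the inclusion $\Ind(\cA[\T]) \subset \Ab(\cE[\T])$ supplied by Lemma \ref{ind}.
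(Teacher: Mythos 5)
Your route is genuinely different from the paper's, and it has a gap that you yourself flag but do not close. The paper's proof is essentially a one-step transport: by Proposition~\ref{Grothabelian} the universal $\T_H$-model in $\cA[\T_H]$ (being in particular a $\T$-model) induces the exact functor $\Ind(\cA[\T]) \to \Ind(\cA[\T_H])$; by Theorem~\ref{cltop} this functor is the restriction of $f^*\colon \Ab(\cE[\T]) \to \Ab(\cE[\T_H])$, which Lemma~\ref{fadd} already exhibits as a Serre quotient with section $f_*$; and Lemma~\ref{ind} (the identification $\Ind(\cA[\T]) \cong \Ab(\cE[\T]) \cap \A(\cA[\T]^{op},\Ab)$) is what lets one check that $f_*$ lands back in the Ind-constructible subcategory, so that $f^*f_*\cong \mathrm{id}$ and the whole quotient-with-section structure descend. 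Nothing about the kernel needs to be made explicit.

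You instead set $\cS \df \ker\Ind(r_H)$, invoke Gabriel localization to get a quotient with a fully faithful right adjoint section $s$, and then try to identify $\Ind(\cA[\T])/\cS$ with $\Ind(\cA[\T_H])$ by a universal-property argument. The weak link, as you note, is the equivalence between ``$G$ exact vanishes on $\cS$'' and ``the $\T$-model corresponding to $G$ is a $\T_H$-model''. This is not automatic: a priori $\cS$ could be strictly larger than the localizing subcategory generated by the syntactic objects $\phi/(\phi\wedge\psi)$ attached to the new axioms, and Proposition~\ref{Grothabelian} only gives you the dictionary between exact functors and $\T$-models, not a built-in description of which Serre subcategory corresponds to which subtheory. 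Your suggested repair --- transporting the description of $\ker f^*$ that comes out of the $J_{\T_H}^{\T}$-topology in Lemma~\ref{fadd} back along Lemma~\ref{ind} --- is the right idea, and if carried out it would recover exactly the content the paper is using, but then you have simply re-derived the paper's argument by a longer route. As written, the step ``Hence $\bar r$ is an equivalence'' is not yet justified, so the proof is incomplete; the direct restriction-of-$f^*f_*\cong\mathrm{id}$ argument avoids the issue entirely.
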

\begin{proof} As the universal model $H^{\T_H}$ of $\T_H$ in $\cA[\T_H]$ is a $\T$-model then there is an exact functor $\Ind (\cA[\T])\to\Ind(\cA[\T_H])$.  Actually, by Theorem \ref{cltop} this is the restriction of $f^*:\Ab (\cE[\T])\to \Ab (\cE[\T_H])$ as in Lemma \ref{fadd}. Using Lemma \ref{ind} we get that $f^*f_*\cong id$ for the Ind-categories as well. 
\end{proof}

\subsection{$\T$-motivic complexes}\label{complexes}
The following constructions are intended for $\cC$ the category of schemes $\Sch_k$ and $\cM$ the subcategory given by closed subschemes. However, the following applies to CW-complexes as well so that we keep some arguments in the categorical setting for the sake of the interested reader keeping in mind the parallel assumptions of \S\ref{singhom} and \S\ref{algsinghom} as a main reference.

Assume that $\cC$ has an initial object $\emptyset$ keeping the notation (and assumptions) as in \eqref{zero}. We have  $\emptyset \to X$ in $\cM$ for each $X$ object of $\cC$ and if $X\to \emptyset$ is in $\cM$ then $X\cong\emptyset$.  Thus $\cM \subset \cC$  has the same objects of $\cC$ and we will assume that we can suitably filter an object $X$ of $\cC$ by maps in $\cM$. 

Further assume that $\cM\subset \cC$ is a subcategory of distinguished monos, \ie we have  $$\text{Iso}(\cC)\subset {\rm Morph} (\cM) \subseteq \text{Mono}(\cC)$$ and also  $n, m\in \cM$ and $m =na$ implies $a\in \cM$.  Let $Y\subseteq X$ be the suboject determined by a mono $Y\into X$ in $\cM$. Denote $\Sub_{\cM}(X)$ the poset of $\cM$-subobjects of $X$. 

Assume that $\cM$ is stable by direct images: given $f: X\to X'$ morphism of $\cC$ there is a smallest $\cM$-factorization, \ie we have
$$\xymatrix{X \ar[r] \ar@/^1.7pc/[rr]^-{f} & \im f \ar[r]^-{m'} & X'}$$
with $m'\in \cM$ and minimal among such factorizations. For $Y\subseteq X$ let $f_*(Y)$ be the suboject determined by $ \im (fm)$ where $m: Y\into X$ represents $Y\subseteq X$.  We then have that $\Sub_{\cM}$ with $f\leadsto f_*$ is a covariant functor (see \cite{LBVD}). Suppose that we have joins $Y\cup Y'$ of $\cM$-subobjects of $X$ so that $\Sub_{\cM}(X)$ is a join-semilattice and a directed poset. For example, this is the case of schemes where $\cM$-subobjects are closed subschemes and $f_*(Y)$ is given by the closure of the image $f(Y)$.

\begin{remark}However, note that we can have $f$ surjective, \ie $\im f =X'$, but $f$ not epi and $f_*:\Sub_{\cM}(X)\to \Sub_{\cM}(X')$ not surjective as a mapping. Assume that $\cM$ is also stable by inverse images, \ie $Y'\subseteq X'$ we have $$f^*(Y')\df X\times _{X'}Y'\subseteq X$$ so that $\Sub_{\cM}$  with $f\leadsto f^*$ is a contravariant functor. Then $f_*f^* = id$ for $f$ surjective if and only if $f$ surjective implies $f_*$ surjective (see \cite{LBVD}).  
\end{remark}

Let $H\in \Tmod (\cA)$ for $\cA$ an (essentially small) abelian category. Note that by the proof of Lemma \ref{purity} it follows that $H_*(X, Y)$ depends of the $\cM$-subobject $Y\subseteq X$ only. Taking the filtered inductive limit on $Y\subsetneq X$ we get
$$H_*(X) \to \limdir{Y} H_*(X, Y)$$
a morphism of $\Ind(\cA)$. For $f : X\to X'$ we get
$$ \limdir{Y} H_*(X, Y)\to \limdir{Y'} H_*(X', Y')$$
induced by $f_*:\Sub_{\cM}(X)\to \Sub_{\cM}(X')$ and we get a family of functors $$\limdir{ }H_*:\cC\to \Ind (\cA)$$

Assume given a suitable ``dimension'' function on $\cC$ or just let $\cC$ be the category $\Sch_k$ or the subcategory ${\rm Aff}_k$ of affine schemes. Suppose that each object $X$ of $\cC$ is provided with finite exhaustive filtrations $X_{\d}$ of ``dimensional type'' 
 $$X_{d+1}=X= X_d\supset \cdots X_p \supset X_{p-1}\cdots \supset X_0 \supset X_{-1}=\emptyset =X_{-2}$$
where $X_p$ has ``dimension'' at most $p$ and $X_p\in \Sub_{\cM}(X)$. Suppose that the inductive system of all such filtrations is filtered and functorial, \eg it is stable under $\cup$ and direct images. Then for $q\in \Z$ fixed we get
$$\partial_{p+q}: \limdir{X_{p-1}\subset X_p} H_{p+q}(X_{p}, X_{p-1})\to  \limdir{X_{p-2}\subset X_{p-1}} H_{p+q-1}(X_{p-1}, X_{p-2})$$
defining a complex of $\Ind (\cA)$ by Lemma \ref{2=0} depending functorially on $X$. Moreover, we get the Grothendieck ``niveau'' spectral sequence.

\begin{lemma}\label{niveau} In the Grothendieck category $\Ind (\cA)$ there is a convergent homological spectral sequence 
$$E^1_{p , q}(X)\df \limdir{X_{p-1}\subset X_p} H_{p+q}(X_{p}, X_{p-1})\Rightarrow H_{p+q}(X)$$
with induced ``niveau'' filtration $$N_pH_{n}(X)\df \im (\limdir{X_p} H_{n}(X_p)\to H_{n}(X))$$
\end{lemma}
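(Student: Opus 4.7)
The plan is to follow Grothendieck's classical construction of a niveau spectral sequence via an exact couple. I first produce the spectral sequence associated to a single dimensional filtration $X_\bullet$ of $X$, and then pass to the filtered colimit over the inductive system of all such filtrations, exploiting the fact that filtered colimits are exact in any Grothendieck category.

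For a fixed filtration $X_\bullet$ and each $p,q\in\Z$, set $D^1_{p,q}\df H_{p+q}(X_p)$ and $E^1_{p,q}\df H_{p+q}(X_p,X_{p-1})$. Applying axioms $\sqcup$3-4 to the $\cM$-inclusion $X_{p-1}\subseteq X_p$ yields the long exact sequence of the pair, and these assemble into an exact couple in $\cA$ with structure maps
$$i\colon D^1_{p-1,q+1}\to D^1_{p,q},\quad j\colon D^1_{p,q}\to E^1_{p,q},\quad k\colon E^1_{p,q}\to D^1_{p-1,q}$$
of bidegrees $(1,-1)$, $(0,0)$ and $(-1,0)$ respectively. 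The standard exact-couple formalism then produces a spectral sequence $(E^r_{p,q}(X_\bullet),d^r)$ with $d^1=jk$; the relation $d^1\circ d^1=0$ is consistent with Lemma~\ref{2=0}. Since $X_{-1}=\emptyset$ forces $E^1_{p,q}=0$ for $p<0$, and $X_p=X$ stabilizes for $p\geq d$, the filtration of $H_n(X)$ is finite, so the spectral sequence converges strongly to $H_{p+q}(X)$ with abutment filtration $N^{X_\bullet}_p H_n(X)\df\im(H_n(X_p)\to H_n(X))$.

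The inductive system of dimensional filtrations of $X$ is filtered and functorial by hypothesis (stable under $\cup$ and direct images), and since $\Ind(\cA)$ is Grothendieck, filtered colimits are exact. Consequently the colimit over $X_\bullet$ of the above exact couples is again an exact couple, whose derived spectral sequence is the termwise colimit of the individual ones. Reindexing the $E^1$-colimit by the pairs $(X_{p-1}\subseteq X_p)$---which is cofinal in the system of $p$-th steps of filtrations---produces
$$\limdir{X_\bullet}E^1_{p,q}(X_\bullet)\;=\;\limdir{X_{p-1}\subseteq X_p}H_{p+q}(X_p,X_{p-1}),$$
while the abutment is unchanged since $H_{p+q}(X_p)=H_{p+q}(X)$ already for $p\geq d$. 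By construction the induced filtration is precisely the claimed $N_p H_n(X)$.

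The main obstacle is the cofinality/functoriality argument needed to reindex the colimit over filtrations as a colimit over pairs $(X_{p-1}\subseteq X_p)$, together with the naturality of the transition maps in the exact couples under refinement of filtrations and along direct images in $\cM$. Both of these follow from the standing assumptions on $\cM$ (stability under joins and direct images, functoriality of $\Sub_{\cM}$); once they are in place, the rest is standard exact-couple bookkeeping transported to $\Ind(\cA)$ via the exactness of filtered colimits in a Grothendieck category.
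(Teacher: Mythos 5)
Your proof is correct and takes essentially the same route as the paper, which simply cites the exact-couple construction of Bloch--Ogus \cite[\S 3]{BO} and invokes Lemma~\ref{purity} for convergence. You have spelled out the details that citation compresses: the exact couple for a single dimensional filtration, boundedness (where $H_*(X,X)=0$, i.e.\ Lemma~\ref{purity}, is what makes $E^1_{p,q}$ vanish for $p>d$ and $p<0$), and passage to the filtered colimit using exactness of filtered colimits in the Grothendieck category $\Ind(\cA)$.
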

\begin{proof} This spectral sequence can be obtained by making use of a standard exact couple as in \cite[\S 3]{BO} and its convergence is granted by Lemma \ref{purity}.
\end{proof} 

Consider the double complex $E^1_{* , *}(X)$ with zero vertical differentials and $$C_{\d}^H(X)\df {\rm Tot}\ E^1_{* , *}(X)$$ the associated total complex together with an augmentation
$$``\bigoplus_{n\in \Z}{} " H_{n}(X)[n] \to C_{\d}^H(X)$$ Moreover $$X\in \cC\leadsto C_{\d}^H(X)\in \Ch(\Ind (\cA))$$ is functorial as for $f :X\to X'$ we have  
$E^1_{* , *}(X)\to  E^1_{* , *}(X')$ from the naturality of $\partial_*$. 

\begin{propose}\label{eqrel} Let $H\in \Tmod (\cA)$ for $\cA$ an (essentially small) abelian category. The there is an exact realization  functor
$$\Ch(\Ind (\cA[\T]))\to \Ch(\Ind (\cA))$$
which is sending $C_{\d}^{H^\T}(X)\leadsto C_{\d}^H(X)$.
\end{propose}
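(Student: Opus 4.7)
The plan is to build the realization in three stages and then verify compatibility with the $C_\d^{(-)}(X)$ construction.

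First, by Definition~\ref{univhom} (a direct consequence of Proposition~\ref{abeliancon}), the model $H\in\Tmod(\cA)$ produces an exact functor $r_H:\cA[\T]\to\cA$ such that the universal homology $H^\T\in\Tmod(\cA[\T])$ satisfies $r_H\circ H^\T=H$ as $\T$-models. Since $\cA[\T]$ is essentially small, $r_H$ extends canonically to $\Ind(r_H):\Ind(\cA[\T])\to\Ind(\cA)$, which preserves small filtered colimits by construction and is exact (the same argument used in the proof of Proposition~\ref{Grothabelian}, invoking \cite[Cor.~8.6.8]{KS}). Extending termwise to chain complexes then yields an exact functor
$$\Ch(\Ind(r_H)):\Ch(\Ind(\cA[\T]))\longrightarrow\Ch(\Ind(\cA)),$$
exactness being preserved because kernels, cokernels and quasi-isomorphisms in $\Ch$ of a Grothendieck abelian category are computed degreewise, where $\Ind(r_H)$ is exact.

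Second, I need to identify the image of $C_\d^{H^\T}(X)$ under this functor with $C_\d^H(X)$. Unwinding the construction of $C_\d^{H^\T}(X)$: the $E^1$-page is obtained by applying $H^\T$ to pairs $(X_p,X_{p-1})$ and taking the filtered colimit over the directed system of $\cM$-filtrations, with the differential $\partial_\ast$ induced by the $\partial$-morphism naturality of $H^\T$. Since $r_H\circ H^\T=H$, degreewise we get
$$\Ind(r_H)\bigl(\limdir{X_{p-1}\subset X_p} H^\T_{p+q}(X_p,X_{p-1})\bigr)\;\cong\;\limdir{X_{p-1}\subset X_p} H_{p+q}(X_p,X_{p-1}),$$
because $\Ind(r_H)$ commutes with filtered colimits and $r_H(H^\T_{p+q}(X_p,X_{p-1}))=H_{p+q}(X_p,X_{p-1})$. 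Exactness of $r_H$ ensures that the induced $\partial_\ast$ on the $H$-side agrees with the image of the $\partial_\ast$ on the $H^\T$-side, so the two double complexes $E^1_{\ast,\ast}(X)$ (for $H^\T$ and for $H$) correspond under $\Ind(r_H)$, and therefore so do their total complexes.

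Finally, functoriality in $X$ is automatic: both $C_\d^{H^\T}(-)$ and $C_\d^H(-)$ are built from the same assignment $X\leadsto$ (filtered system of filtrations) composed with the respective homology functor, and $\Ind(r_H)$ is natural. The only mildly delicate point—which I would flag as the main technical matter to check carefully—is the compatibility of $\Ind(r_H)$ with the filtered colimit defining the $E^1$-page and with the exact-couple formation of $\partial_\ast$; both reduce to exactness of $r_H$ plus preservation of filtered colimits by $\Ind(r_H)$, so no real obstacle arises. Hence $\Ch(\Ind(r_H))$ is the desired realization sending $C_\d^{H^\T}(X)$ to $C_\d^H(X)$.
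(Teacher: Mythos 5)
Your proof is correct and follows essentially the same route as the paper: take $r_H$ from the universal property (Proposition~\ref{abeliancon}/Definition~\ref{univhom}), extend to $\Ind$-categories via \cite[Cor.~8.6.8]{KS}, and apply termwise to chain complexes. The paper states this in two sentences and leaves implicit the compatibility check that $\Ind(r_H)$ carries $C_\d^{H^\T}(X)$ to $C_\d^H(X)$; your verification that $\Ind(r_H)$ commutes with the filtered colimits defining the $E^1$-page and with $\partial_*$ correctly fills in that gap.
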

\begin{proof}
Note that for $\cA[\T]$ and $H^\T\in \Tmod (\cA[\T])$ the universal homology in Definition \ref{univhom} we get an exact realization functor $r_H:\cA [\T] \to \cA$ induced by $H$ and sending $H^\T\leadsto H$. The induced exact functor $\Ind(\cA [\T]) \to \Ind(\cA)$ yields the claimed functor.
\end{proof}
 
\begin{lemma} \label{cellular}
Assume $H\in \Tmod (\cA)$ and $\cC$ such that $$\limdir{X_{p-1}\subset X_p} H_{p+q}(X_{p}, X_{p-1})=0$$ 
for $q\neq 0$. Then the complex $C_\d^H (X)$ is given by the following bounded complex 
$$ \cdots \to \limdir{X_{p-1}\subset X_p} H_{p}(X_{p}, X_{p-1})\to  \limdir{X_{p-2}\subset X_{p-1}} H_{p-1}(X_{p-1}, X_{p-2})\to \cdots $$
concentrated between $0$ and $d = \dim (X)$  and $$H_n(C_{\d}^H(X))\cong H_n(X)\in \cA$$
Moreover $C_{\d}^H(X)\in  D^b(\cA)$ and $H_n(X)\neq 0$ implies  $0\leq n\leq d$.
\end{lemma}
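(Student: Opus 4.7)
The strategy is to exploit degeneration of the Grothendieck niveau spectral sequence of Lemma \ref{niveau} under the hypothesis $E^1_{p,q}(X)=0$ for $q\neq 0$. First I would note that in the double complex $E^1_{*,*}(X)$ defining $C_\d^H(X)=\Tot E^1_{*,*}(X)$ the vertical differentials are zero and the only non-trivial row is $q=0$; hence the totalization reduces to the single-row complex $\cdots \to E^1_{p,0}(X) \to E^1_{p-1,0}(X) \to \cdots$ with horizontal differential $\partial_p$, which is exactly the complex displayed in the statement.

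Boundedness of this complex follows from Lemma \ref{purity} applied to the degenerate endpoints of the dimensional filtration $X_\d$. Since $X_{-1}=X_{-2}=\emptyset$, the pair $(X_p, X_{p-1})$ is $(\emptyset,\emptyset)$ for $p\leq -1$; symmetrically $X_d=X_{d+1}=X$ gives $(X,X)$ for $p\geq d+1$. In either case $H_*(X_p,X_{p-1})=0$ by Lemma \ref{purity}, so $E^1_{p,0}(X)=0$ outside $0\leq p\leq d$, yielding the claimed concentration.

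For the homology computation, the one-row concentration at $E^1$ forces all higher differentials $d^r$ ($r\geq 2$) to vanish: starting from $E^r_{p,0}$ they land in $E^r_{p-r,r-1}$, which is zero since $r-1\neq 0$. Hence $E^2=E^\infty$, and the niveau filtration on $H_n(X)$ has a single non-trivial jump at $p=n$ (using also that $N_{-1}H_n(X)=0$ and $N_dH_n(X)=H_n(X)$, from the bounds on the filtration). Convergence of the spectral sequence then gives
\[
H_n(C_\d^H(X)) \,=\, H_n(E^1_{*,0}(X)) \,=\, E^2_{n,0} \,=\, E^\infty_{n,0} \,=\, H_n(X),
\]
which lies in $\cA$ because $H\in\Tmod(\cA)$. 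The vanishing of $H_n(X)$ for $n\notin[0,d]$ is then immediate from the concentration of the complex.

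Finally, the claim $C_\d^H(X)\in D^b(\cA)$ — as opposed to merely in $D^b(\Ind(\cA))$ — follows from the fact that the exact embedding $\cA\hookrightarrow \Ind(\cA)$ of Proposition \ref{Grothabelian} induces a fully faithful embedding $D^b(\cA)\hookrightarrow D^b(\Ind(\cA))$ whose essential image consists of bounded complexes with cohomology in $\cA$; our complex satisfies this criterion by the previous paragraph. I expect the subtlest step to be confirming that full degeneration of the spectral sequence proceeds in the Grothendieck category $\Ind(\cA)$ as it would classically, but this is essentially formal once one has the convergence statement of Lemma \ref{niveau} and the observation that the $d^r$ for $r\geq 2$ are zero for domain-target reasons.
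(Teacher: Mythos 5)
Your proof is correct and takes essentially the same route as the paper: degeneration of the niveau spectral sequence at $E^2$ because $E^1_{p,q}=0$ for $q\neq 0$, collapse of the double complex to the single row $E^1_{*,0}$, identification $E^2_{n,0}\cong H_n(X)$, and finally the fully faithful embedding $D^b(\cA)\iso D^b_{\cA}(\Ind(\cA))\subset D^b(\Ind(\cA))$ from \cite[Thm.\ 15.3.1]{KS}. The only (welcome) additions are your explicit boundedness check via Lemma~\ref{purity} at the endpoints $p<0$ and $p>d$, and the remark that $d^r$ for $r\geq 2$ vanish for degree reasons — both of which the paper leaves implicit.
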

\begin{proof} It follows from Lemma \ref{niveau}. The spectral sequence $E^r_{p , q}(X)$ degenerates at $E^2_{p , q}$ as $E^1_{p , q}(X) =0$ for $q\neq 0$. Thus $C_{p}^H(X)= E^1_{p , 0}(X)$ and  $E^2_{p , 0}(X)= H_p(C_{\d}^H(X))\cong H_n(X)$ for $n=p$. Finally, let  $D^b_{\cA}(\Ind(\cA))$ be the triangulated subcategory of $D^b(\Ind(\cA))$ determined by those bounded complexes whose homology is in $\cA$. We have that 
$D^b(\cA) \by{\sim}D^b_{\cA}(\Ind(\cA))$ and
thus $C_{\d}^H(X)\in D^b(\cA)$ (see \cite[Thm. 15.3.1 (i)]{KS}). 
\end{proof}
Consider the case of Nori's homology $H^{\rm Nori}\in \Tmod ({\sf EHM})$ in \eqref{Norihom}. Lemma \ref{cellular} holds for $H^{\rm Nori}$ and ${\rm Aff}_k$ affine schemes: by the ``basic Lemma" affine schemes and singular homology are provided with a cofinal system of filtrations given by ``good pairs'' (see \cite[\S 2.5 \& \S 8.2]{HMN}). 

Let ${\rm Aff}_X$ be the category of affine schemes over $X$ in $\Sch_k$ and let $\phi_X:{\rm Aff}_X \to {\rm Aff}_k$ be the forgetful functor. Nori's motivic functor is
$$X\in \Sch_k \leadsto M (X)\df {\rm Tot}\ {\rm Nerve}\ (C_{\d}^{H^{\rm Nori}}\circ\phi_X)\in \Ch (\Ind ({\sf EHM})) $$
given by the composition of Nerve and Tot functors. Note that for $X =\Spec (A)$ affine we have 
$M (X) \cong C_{\d}^{H^{\rm Nori}}(X)$.
\begin{propose} \label{Nmotf}
There is an exact realization functor 
$$r^{\rm Nori}: \Ch(\Ind (\cA[\T]))\to \Ch(\Ind ({\sf EHM}))$$
and a factorization of Nori's motivic functor
$$\xymatrix{\Sch_k\ar[r]^-{C} \ar@/_/[dr]_-{M} & \Ch(\Ind (\cA[\T]))\ar[d]^-{r^{\rm Nori}}\\ 
& \Ch(\Ind ({\sf EHM}))} $$
Furthermore, the functor $r^{\rm Nori}$ is a Serre quotient with a section.
\end{propose}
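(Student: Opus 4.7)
The strategy is to combine Propositions \ref{eqrel} and \ref{indquot} via the identification of Corollary \ref{Nori}. First, I would produce $r^{\rm Nori}$ as the realization of $H^{\rm Nori}$. Since $H^{\rm Nori}\in\Tmod({\sf EHM})$ and ${\sf EHM}\cong \cA[\T_{H^{\rm sing}}]$ is essentially small abelian (Corollary \ref{Nori}), Proposition \ref{eqrel} directly furnishes an exact functor
$$r^{\rm Nori}:\Ch(\Ind(\cA[\T]))\to \Ch(\Ind({\sf EHM}))$$
which on cellular complexes of affine $k$-schemes acts as $C_\d^{H^\T}(U)\mapsto C_\d^{H^{\rm Nori}}(U)$.

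Next, I would define $C$ by the same recipe used for $M$ but with the universal homology $H^\T$ in place of $H^{\rm Nori}$, namely
$$C(X)\df {\rm Tot}\ {\rm Nerve}(C_\d^{H^\T}\circ \phi_X)\in \Ch(\Ind(\cA[\T])).$$
The identity $r^{\rm Nori}\circ C=M$ then reduces to observing that $r^{\rm Nori}$, being exact and additive between Grothendieck abelian categories, commutes termwise with the finite biproducts and small direct sums assembling ${\rm Nerve}$ (a bar-type construction indexed by ${\rm Aff}_X$) and ${\rm Tot}$. Combined with the pointwise identity $r^{\rm Nori}(C_\d^{H^\T}(U))=C_\d^{H^{\rm Nori}}(U)$ for $U\in{\rm Aff}_X$ (Proposition \ref{eqrel}), this yields the desired factorization, functorially in $X\in\Sch_k$.

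For the Serre-quotient statement I would first check that the regular theory of $H^{\rm Nori}$ equals $\T_{H^{\rm sing}}$: the forgetful functor $F_T:{\sf EHM}\to\Ab$ of Theorem \ref{BVCL} is exact and faithful and so reflects the validity of regular sequents, whence $H^{\rm Nori}$ and $H^{\rm sing}=F_T\circ H^{\rm Nori}$ satisfy exactly the same regular axioms. Corollary \ref{Nori} then identifies $\cA[\T_{H^{\rm Nori}}]\cong {\sf EHM}$, and Proposition \ref{indquot} shows that the ind-level component of $r^{\rm Nori}$ is a Serre quotient with fully faithful section $g_*$ and $g^*g_*\cong id$. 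Passing termwise to chain complexes preserves this data: $\Ch(g_*)$ remains fully faithful, $\Ch(g^*)\Ch(g_*)=id$, and $\ker\Ch(g^*)=\Ch(\ker g^*)$ is a Serre subcategory of $\Ch(\Ind(\cA[\T]))$ whose quotient is $\Ch(\Ind({\sf EHM}))$. The main obstacle I anticipate is the bookkeeping around ${\rm Nerve}$: one must pin down the explicit simplicial/bar construction on the diagram ${\rm Aff}_X$ and verify that $r^{\rm Nori}$ commutes with it termwise, but once ${\rm Nerve}$ and ${\rm Tot}$ are recognised as (co)limits of finite biproducts, exactness and additivity of $r^{\rm Nori}$ close the argument.
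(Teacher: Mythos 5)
Your proposal is correct and takes essentially the same route as the paper: obtain $r^{\rm Nori}$ from Proposition \ref{eqrel} with $\cA={\sf EHM}$ and $H^{\rm Nori}$, define $C(X)\df{\rm Tot}\ {\rm Nerve}(C_\d^{H^\T}\circ\phi_X)$, deduce the factorization from the commutativity of the triangle over ${\rm Aff}_k$, and derive the Serre-quotient property from Proposition \ref{indquot}. The details you fill in (that $r^{\rm Nori}$ commutes with the Nerve/Tot assembly by exactness and additivity, and that the regular theory of $H^{\rm Nori}$ coincides with $\T_{H^{\rm sing}}$ so that Proposition \ref{indquot} applies, passing termwise to chain complexes) are left implicit in the paper's terse proof but match its intent.
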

\begin{proof} The realization $r^{\rm Nori}$ is given by Proposition \ref{eqrel} with $\cA = {\sf EHM}$ and $H^{\rm Nori}\in \Tmod ({\sf EHM})$.  Following the original argument due to Nori define
$$C (X)\df {\rm Tot}\ {\rm Nerve}\ (C_{\d}^{H^\T}\circ\phi_X)$$
and get the functor $C$ in the claimed factorisation. In fact, since $$\xymatrix{{\rm Aff}_k\ar[r]^-{C.^{H^\T}} \ar@/_/[dr]_-{C.^{H^{\rm Nori}}} & \Ch(\Ind (\cA[\T]))\ar[d]^-{r^{\rm Nori}}\\ 
& \Ch(\Ind ({\sf EHM}))} $$
commutes therefore the claimed commutativity holds as well. The last claim follows from Proposition \ref{indquot}.
\end{proof}

Consider, similarly, the case of Suslin-Voevodsky singular homology $H^{\rm SV}\in \Tmod ({\rm PST})$ as in \S \ref{algsinghom}. We get a realization by Proposition \ref{eqrel} $$r^{\rm SV}: \Ch(\Ind (\cA[\T]))\to \Ch({\rm PST})$$ (\cf Proposition \ref{Grothabelian}). This functor further localize: 
\begin{propose}\label{DM}
There is a realization triangulated functor
$$D(\Ind (\cA[\T]))\to \DM^\eff$$
where $\DM^\eff$ is the (unbounded) triangulated category of Voevodsky effective motivic complexes. 
\end{propose}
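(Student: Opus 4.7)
The plan is to take the chain-level realization associated to $H^{\rm SV}$, extend it to a triangulated functor on unbounded derived categories, and then compose with the standard sheafification and $\Aff^1$-localization that produce $\DM^\eff$ from presheaves with transfers.

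First, since ${\rm PST}$ is a Grothendieck abelian category, Proposition~\ref{Grothabelian} applied to $H^{\rm SV}\in \Tmod({\rm PST})$ gives an exact realization functor $\Ind(\cA[\T])\to {\rm PST}$ lifting the chain-level $r^{\rm SV}$. As any exact functor between abelian categories induces a triangulated functor on unbounded derived categories, I obtain
$$D(\Ind(\cA[\T]))\longrightarrow D({\rm PST}).$$
Next, Nisnevich sheafification $a_{\Nis}:{\rm PST}\to {\rm NST}$ is exact and preserves transfers (see \cite{VL}), hence induces a triangulated functor $D({\rm PST})\to D({\rm NST})$. Finally, by definition $\DM^\eff$ is obtained from $D({\rm NST})$ by Bousfield localization at the $\Aff^1$-weak equivalences (equivalently, as the reflective triangulated subcategory of $\Aff^1$-local complexes), which provides a triangulated localization $D({\rm NST})\to \DM^\eff$. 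Composing the three functors
$$D(\Ind(\cA[\T]))\longrightarrow D({\rm PST})\longrightarrow D({\rm NST})\longrightarrow \DM^\eff$$
yields the sought realization.

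The main point to double-check is that the construction is sensible on \emph{unbounded} derived categories: this is where the Grothendieck hypothesis matters, because $D(\Ind(\cA[\T]))$, $D({\rm PST})$ and $D({\rm NST})$ all carry the injective model structure (cf.\ \cite{CD}, \cite{Bk} as cited in the introduction) for which the triangulated functor above is literally the total left derived functor of an exact functor and the Bousfield localization at $\Aff^1$-weak equivalences is a left Bousfield localization of a proper combinatorial model structure. Everything else is formal: exactness of each intermediate step ensures that no derivation is needed and that triangulatedness passes through all three compositions.
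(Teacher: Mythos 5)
Your proof is correct and takes essentially the same route as the paper: chain-level realization via $H^{\rm SV}$ (Propositions~\ref{eqrel}/\ref{Grothabelian}), prolong by Nisnevich sheafification to land in (complexes of) ${\rm NST}$, then compose with the $\Aff^1$-Bousfield localization $D({\rm NST})\to\DM^\eff$. The only cosmetic difference is that you apply sheafification after passing to derived categories whereas the paper sheafifies at the chain level before deriving; since $a_{\Nis}$ is exact these are equivalent, and your extra remarks on the injective model structures are consistent with the references the paper already invokes.
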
 
\begin{proof} Note that by sheafification (see \cite[Thm. 13.1]{VL}) we can prolong $r^{\rm SV}$ with target $\Ch({\rm NST})$ for Nisnevich sheaves with transfers NST and we get $D(\Ind (\cA[\T]))\to D({\rm NST})$.  Recall that $\DM^\eff\subset D({\rm NST})$ is a Bousfield localization of $D({\rm NST})$ and the claimed realization is then obtained by composition with the $\Aff^1$-localization functor which is left adjoint to the inclusion (see \cite[Thm. 14.1]{VL}).\end{proof}
\begin{remark} A new $t$-structure on the $\Q$-linearized category $\DM^\eff_\Q$ shall be obtained by showing that the category $\DM^\eff_\Q$ is a Bousfield localization of the model category $\Ch(\Ind (\cA[\T]))_\Q$. Actually, these categories share a common ``motivic'' $t$-structure as we can see from the following chain of equivalences (see \cite{Ay}, \cite{ABV} and \cite{BVK})
$$\DM^\eff_{\leq 1}\cong D(\Ind (\cM_1^\Q))\cong  D(\Ind ({\sf EHM_1^\Q}))$$
where $\DM^\eff_{\leq 1}$ is the smallest subcategory
of $\DM^\eff_\Q$ closed under infinite sums generated by the motives of curves, $\cM_1^\Q$ is the abelian category of Deligne $1$-motives up to isogenies and ${\sf EHM_1^\Q}$ is the abelian subcategory of ${\sf EHM^\Q}$ generated by
the $i$-th Nori's homologies $H_i^{\rm Nori}(X,Y)$ for $i \in \{0, 1\}$.
\end{remark}
\subsubsection*{Acknowledgements} 
I am very grateful to Olivia Caramello, Silvio Ghilardi and Laurent Lafforgue for many helpful discussions.



\begin{thebibliography}{}

\bibitem{An}  Y. Andr\'e: {\it Une introduction aux motifs (motifs purs, motives mixtes, p\'eriodes)} Panoramas et Syth\'eses N. 17 SMF (2004). 
\bibitem{Ay} J. Ayoub: Motives and algebraic cycles: a selection of conjectures and open questions, preprint, 2015.
\bibitem{ABV} J. Ayoub \& L. Barbieri-Viale: Nori $1$-motives, {\it Math. Ann.} Vol. 361, Issue 1-2 (2015) 367-402.
\bibitem{SGA4} M. Artin, A. Grothendieck \& J.-L. Verdier: {\it Th\'eorie des topos et
cohomologie \'etale des sch\'emas}, S\'eminaire de g\'eom\'etrie alg\'ebrique du Bois-Marie
(SGA 4), Vol. 1, Lect. Notes in Math. 269, Springer, 1972.
\bibitem{LBVD} L. Barbieri-Viale: Lattice-theoretic aspects of doctrines and hyperdoctrines, {\it Rendiconti Acc. Naz. delle Scienze detta dei XL} Memorie di Matematica 104, Vol. X, Issue 8, (1986) 93-102
\bibitem{LBVBO} L. Barbieri-Viale: $\cH$-cohomologies versus algebraic cycles, {\it Math. Nachrichten} Vol. 184 (1997)  5-57.
\bibitem{BVK} L. Barbieri-Viale \& B. Kahn: {\it On the derived category of $1$-motives}  to appear in Ast\'erisque, SMF, 2016.
\bibitem{BVCL}  L. Barbieri-Viale,  O. Caramello \&  L. Lafforgue:  Syntactic categories for Nori motives, arXiv:1506.06113v1 [math.AG] (2015)
\bibitem{PBV} L. Barbieri-Viale \& M. Prest: Definable categories and $\T$-motives, 	arXiv:1604.00153 [math.AG] (2016)
\bibitem{Bk} T. Beke, Sheafifiable homotopy model categories, {\it Math. Proc. Cambridge Philos. Soc.}  129 (2000), no. 3, 447-475.
\bibitem{BO} S. Bloch \& A. Ogus: Gersten's conjecture and the homology of schemes, {\it Ann.Sci.Ecole Norm.Sup.}  7 (1974) 181-202
\bibitem{Bu} C. Butz: {\it Regular categories \& regular logic} BRICS Lecture Series LS-98-2 (1998)
\bibitem{BJ} A. Boileau \& A. Joyal: La logique des topos, {\it J. Symbolic Logic} Vol. 46, no. 1, (1981) 6-16. 
\bibitem{BJ} C. Butz \& P. Johnstone: Classifying toposes for first-order theories, {\it Ann. of Pure Appl. Logic} Vol. 91, Issue 1 (1998) 33-58
\bibitem{C}  O. Caramello: {\it Lattices of theories} forthcoming book, Oxford University Press.
\bibitem{D}  E. J. Dubuc: On the representation theory of Galois and atomic topoi, {\it Journal of Pure and Appl. Algebra} Vol. 186, Issue 3, (2004) 233-275
\bibitem{GC} U. Choudhury \& M. Gallauer Alves de Souza: An isomorphism of motivic Galois groups, arXiv:1410.6104v2 [math.AG] (2015)
\bibitem{CD} D.-C. Cisinski \& F. D\'eglise: Local and stable homological algebra in Grothendieck abelian categories, {\it Homology Homotopy Appl.} Vol. 11, N. 1 (2009), 219-260.
\bibitem{J} P. Johnstone: Open maps of toposes, {\it Manuscripta Math.} Vol. 31, Issue 1, (1980) 217-247
\bibitem{El} P. Johnstone: {\it Sketches of an Elephant: A Topos Theory Compendium} Vol. 1 \& 2, Clarendon Press, Oxford Logic Guides Vol. 43 \& 44, 2002.
\bibitem{JT} A. Joyal \& M. Tierney:  {\it An extension of the Galois Theory of Grothendieck} Memoirs AMS, Vol. 151, 1984.
\bibitem{HMN} A. Huber \& S. M\"uller-Stach: {\it Periods and Nori motives}, book in preparation, 2015.
\bibitem{KS} M. Kashiwara \& P. Schapira: {\it Categories and Sheaves}, Grundlehren der math. Vol. 332, Springer, 2006.
\bibitem{La}  S. Lack: A note on the exact completion of a regular category, and its
infinitary generalizations, {\it Theory \& Appl. of Cat.} 5 No. 3, (1999) 70-80 
\bibitem{Let}  G. Laumon: Homologie \'etale, in {\it S\'eminaire de g\'eom\'etrie analytique} (\'Ecole Norm. Sup., Paris, 1974-75) 163-188. Ast\'erisque, No. 36Ð37. SMF, Paris, 1976.
\bibitem{NT} A. Neeman: {\it Triangulated categories}, Annals of Math. Studies Vol. 148, Princeton Univ. Press, 2001.
\bibitem{McM} S. Mac Lane \& I. Moerdijk:  {\it Sheaves in geometry and logic. A first introduction to topos theory}, Universitext, New York, Springer-Verlag, (1992)
\bibitem{M}  A. Macintyre: Weil cohomology and model theory, in {\it Connections between
model theory and algebraic and analytic geometry} Ed. A. Macintyre, Quaderni di Matematica Vol. 6, II Univ. di Napoli, Aracne, Caserta (2000)
\bibitem{VL} C. Mazza,  V. Voevodsky and  C. Weibel: {\it Lecture Notes on Motivic Cohomology}, Clay Mathematics Monographs,  Vol. 2, 2006 (Based upon lectures given by V. Voevodsky at IAS in 1999--2000)
\bibitem{Vh} V. Voevodsky: Homology of Schemes,  {\it Selecta Mathematica} Vol. 2  (1996) 111-153
\bibitem{Vcd} V. Voevodsky: Homotopy theory of simplicial sheaves in completely decomposable topologies,  {\it Journal of Pure Appl. Algebra}  Vol. 214 (2010) 1384-1398
\bibitem{WH} C. Weibel: {\it An introduction to homological algebra} Cambridge Univ. Press, Cambridge studies in adv. math. Vol. 38, 1994
\end{thebibliography}
\end{document}